\documentclass{stylefile}

\newif\ifshowdeletions
\showdeletionsfalse

\newcommand{\CH}[2][\relax]{
\def\test{#1}
\ifshowdeletions\ifx\test\relax\else\DEL{ #1}\fi\fi
{\color{magenta}{#2}}
}

\newcommand{\DEL}[1]{{\color{magenta}\sout{#1}}}

\journal{Theoretical Computer Science}

\usepackage[utf8]{inputenc}
\usepackage[T1]{fontenc}

\usepackage[inline]{enumitem}
\usepackage{multirow}
\usepackage{amssymb}
\usepackage{amsmath}
\usepackage{booktabs}
\usepackage{xcolor}
\usepackage[normalem]{ulem}
\usepackage{subcaption}
\usepackage{here}
\usepackage{url}
\usepackage{hyperref}

\newcommand{\grt}{{\ge\,}}

\newcommand{\NN}{\mathbb{N}}
\newcommand{\ZZ}{\mathbb{Z}}

\newcommand{\e}{\,|\,}
\renewcommand{\mod}{\;\textrm{mod}\;}

\begin{document}

\newtheorem{theorem}{Theorem}
\newtheorem{remark}{Remark}
\newtheorem{corollary}{Corollary}
\newtheorem{lemma}{Lemma}
\newtheorem{definition}{Definition}
\newtheorem{proposition}{Proposition}
\newtheorem{observation}{Observation}
\newtheorem{strategy}{Strategy}
\newproof{proof}{Proof}
\newtheorem{claim}{Claim}

\begin{frontmatter}

\title{Optimal Strategies for Static Black-Peg AB~Game With
Two and Three Pegs\ \tnoteref{prelim}}\tnotetext[prelm]
{A preliminary version of a part of this paper appeared
in the proceedings of the 11th International Conference
on Combinatorial Optimization and Applications, COCOA 2017,
Ed.: Gao, X., Du, H. Han, M., LNCS 10628, pp.~409-424.}

\author[Math]{Gerold J\"ager}
\ead{gerold.jaeger@math.umu.se}
\address[Math]{Department of Mathematics and Mathematical Statistics,
University of Ume{\aa}, SE-901-87 Ume{\aa}, Sweden}

\author[CS]{Frank Drewes}
\ead{frank.drewes@cs.umu.se}
\address[CS]{Department of Computer Science,
University of Ume{\aa}, SE-901-87 Ume{\aa}, Sweden}

\begin{abstract}
The AB~Game is a game similar to the popular game Mastermind. We
study a version of this game called Static Black-Peg AB~Game.
It is played by two players, the codemaker and the codebreaker.
The codemaker creates a so-called secret
by placing a color from a set of $c$ colors
on each of $p \leq c$ pegs, subject to the condition that every color is used at most once.
The codebreaker tries to determine the secret by asking questions, 
where all questions are given at once and each question is a possible secret.
As an answer the codemaker reveals the number of correctly placed colors
for each of the questions.
After that, the codebreaker only has one more try to determine the secret and
thus to win the game.

For given $p$ and $c$, our goal is to find the smallest number
$k$ of questions the codebreaker needs to win, regardless of the secret,
and the corresponding list of questions,
called a $(k+1)$-strategy.
We present  a $ (\lceil 4c/3 \rceil-1) $-strategy for $p=2$
for all $ c \ge 2 $, 
and a $ \lfloor (3c-1)/2 \rfloor $-strategy for $p=3$ for all $ c \ge 4 $
and show the optimality of both strategies, i.e., we prove that no
$(k+1)$-strategy for a smaller~$k$ exists.

\end{abstract}

\begin{keyword}
game theory \sep Mastermind \sep AB~Game \sep optimal strategy
\end{keyword}

\end{frontmatter}

\section{Introduction}
\label{sec-intro}

The AB~Game (also known as ``bulls and cows game'') 
is a game similar to the popular game Mastermind. 
Whereas the first one dates back more than a century,
the latter was invented by Meirowitz
in 1970. Mastermind has since turned out to have interesting applications 
in fields such as cryptography~\cite{FL12}, bioinformatics~\cite{GET11} 
and privacy protection~\cite{AG13}.
In both games a \emph{codemaker} and a
\emph{codebreaker} play against each other. The codemaker chooses a secret 
code by placing colors from a set of $c$ available colors on $p$ pegs. In the 
original version of the AB Game and Mastermind, respectively, 
$p=4$, $c=6$ and $p=4$, $c=10$, respectively, are fixed, but to make the 
computational and mathematical properties of the game interesting, at least one 
of these parameters must be made variable. The goal of the codebreaker is to 
discover the secret code by making a sequence of guesses until the secret 
has been found. Each guess is a possible secret. 
The corresponding answers of the codemaker consist of
black and white pegs, a black one for each peg of the
question which is correct in both position and color, and
a white one for each peg which is correct in
color but not in position. The goal of the codebreaker is to minimize the
number of questions needed to find the secret, i.e., to receive $p$ black pegs as the last answer. We call games of this kind \emph{codebreaking games}.

Mastermind can be turned into a decision problem, 
Mastermind Satisfiability, as 
follows: given a list of questions and corresponding answers, are these 
answers compatible with at least one possible secret? Interestingly, 
in~\cite{SZ06} this problem was shown to be $\mathcal{NP}$-complete. 
For further results on (non-static) Mastermind see for 
example~\cite{DDST16,JP09,MS18}.

The difference between Mastermind and the AB~Game is that a possible secret or 
question of the latter may contain repeated colors whereas the AB~Game requires 
the secret as well as each question to consist of $p$ \emph{distinct} colors. Thus, the 
distinction between the two games mirrors the well-known distinction made in 
combinatorics between an ordered choice of $p$ items from a set of $c$ elements 
either with or without replacement.

Both Mastermind and the AB Game also have a black-peg 
variant, where the answers of the codebreaker contain only black pegs 
(i.e., in addition to how many pegs are correct in both position and 
color, no further information 
about correct colors placed on the wrong pegs is provided).

While strategies for the Black-Peg AB~Game were previously studied 
in~\cite{EGSS18,JP15a,KT86}, 
we continue our study of optimal strategies for the \emph{static} 
black-peg variant of codebreaking games which was started 
in~\cite{GJSS17,GJSS21,Jag16,JD18,JD20}. 
The static black-peg variant differs from the 
standard version described as follows. The codebreaker is 
required to present all questions except the last one (which reveals the secret) 
right at the beginning of the game. Thus, the game is \emph{static}, meaning 
that the codebreaker cannot adapt later questions to previous answers. 

In~\cite{DDST16} it was shown that the original and the static version of
Mastermind need $\mathcal{O}(n \log\log n)$ questions 
for the most prominent case $n=c=p$.
For the Static Black-Peg AB Game, for this case $n=p=c$,
a lower bound of $ \Omega(n\log n) $ and 
an upper bound of $ \mathcal{O}(n^{1.525}) $ were presented in~\cite{GJSS21}, 
and this upper bond has recently been improved to
$ \mathcal{O}(n \log n) $~\cite{LMS22}.

A \emph{$(k+1)$-strategy} is
a sequence of $k$ questions such that the answers to these questions uniquely
determine the secret (i.e., the codebreaker can win the game with
the $(k+1)$-th question). We are interested in 
optimal strategies -- $(k+1)$-strategies where $k+1$ is as small as possible.
Let us denote this optimal $k+1$ (depending on $p$ and $c$) by $ s(p,c) $. Erdös and 
Rényi~\cite{ER:63} and Söderberg and Shapiro~\cite{SS63}
showed independently that $ s(p,c) \in \mathcal{O}(p/{\log p}) $ for $c=2$, a 
result that was later generalized to $c \le p^{1-\epsilon} $ for $ \epsilon > 0 $ 
by Chv\'atal~\cite{Chv83}.
Goddard~\cite{God03} developed a $ (\left\lceil 2c/3 \right\rceil 
+1)$-strategy  for two pegs and $c$-strategies for three and 
four pegs. For sufficiently large $c$, these strategies are optimal.

In~\cite{Jag16}, we presented an optimal $\left\lceil 
(4c-1)/3 \right\rceil $-strategy  for Static Black-Peg Mastermind in the case of 
$p=2$ pegs and an arbitrary number $c \ge 1$ of colors, 
and in~\cite{JD20} an optimal $(\left\lfloor 
3c/2 \right\rfloor +1) $-strategy for Static Black-Peg Mastermind in the case of 
$p=3$ pegs and an arbitrary number $c \ge 2$ of colors. We now continue this 
line of research by considering the corresponding cases of the 
Static Black-Peg AB~Game with $p=2$ and $p=3$, respectively.
We start with an investigation of the simpler $p=2$ case 
and an arbitrary number $c$ of colors
(where by definition of the AB Game, $c \ge 2 $ must hold) 
resulting in an optimal 
$(\lceil 4c/3 \rceil-1)$-strategy. This strategy improves an earlier optimal 
strategy that was presented in~\cite{GJSS17} without explicit proof of 
feasibility and optimality. 
While our new strategy obviously has the same number 
of questions as the earlier one, the improvement lies in its structural 
simplicity. 

Our main result of this work is an optimal 
$ \lfloor (3c-1)/2 \rfloor $-strategy 
for the Static Black-Peg AB~Game with $p=3$
and an arbitrary number $c \ge 4$ of colors.
The strategies for both cases for the Static Black-Peg AB Game with $p=2$ and $p=3$ 
have a similar structure. This structure can also be applied to and simplify 
our earlier strategies for Static Black-Peg Mastermind with $p=2$ and 
$p=3$~\cite{Jag16,JD20}. 
Our expectation is that extending and comparing the strategies for Static Black-Peg Mastermind 
of~\cite{Jag16,JD20} and of the AB Game in this work will eventually make it 
possible to develop generic optimal 
strategies for arbitrary $p$ for both of these codebreaking games.

Interestingly, there is also a graph-theoretic interpretation of strategies
for Static Black-Peg Mastermind to the so-called metric dimension. 
For an undirected graph $G$, the metric dimension of $G$ 
is defined as the minimum number of vertices 
in a subset $S$ of $G$ such that all other vertices are uniquely determined by 
their distances to the vertices in $S$, see  
for example~\cite{FHHMS15,RKYS15,YKR11} for results on special graph classes.
Furthermore, the decision variant of this problem is $\mathcal{NP}$-complete~\cite{GJ79}

Concretely, Static Black-Peg Mastermind with $p$ pegs and $c$ colors 
needing exactly $k$ questions is equivalent to the metric dimension of the graph 
$ \ZZ_c^p $ being $k$ 
(see \cite{JD20} for more details). Thus, the results of~\cite{Jag16,JD20}
show that the metric dimension of $ \ZZ_n \times \ZZ_n $ is 
$\left\lceil (4c-1)/3 \right\rceil -1 $ and that of $ \ZZ_n \times \ZZ_n \times \ZZ_n $
is $\left\lfloor 3c/2 \right\rfloor $.
An open question is whether the results of the present paper can also be connected
to the metric dimension of other types of graphs. In any case, our new way to 
assemble optimal strategies by iterating color-shifted copies of a fixed block 
can similarly be used to compose simpler optimal strategies for Static Black-Peg 
Mastermind than those in~\cite{Jag16,JD20}. In turn, this results in simplified 
proofs of the fact that the metric dimensions of $ \ZZ_n \times \ZZ_n $ and $ 
\ZZ_n \times \ZZ_n \times \ZZ_n $ are $\left\lceil (4c-1)/3 \right\rceil -1 $ 
and $\left\lfloor 3c/2 \right\rfloor $, respectively.

This work is set up as follows. After giving some
preliminaries in Section~\ref{sec-prel},
we present an optimal $ (\lceil 4c/3 \rceil-1) $-strategy for $p=2$ in
Section~\ref{sec_two}
and an optimal $ \lfloor (3c-1)/2 \rfloor $-strategy for $p=3$ in
Section~\ref{sec_three}.
Finally, we conclude and suggest possible future work
in Section~\ref{sec-futurework}.

\section{Preliminaries}
\label{sec-prel}

Let $p\ge 1$ denote the number of pegs and $c \ge p$ the number of colors.
W.l.o.g., throughout the remainder of this paper, let the 
pegs be numbered by $ 1,2\dots,p$, and the colors by $ 1,2,\dots,c$. 
We write the questions and secrets in the 
form 	$ Q = (q_1\e q_2\e \dots\e q_p)$.
The possible answers are written as 
$0$B, $1$B, $2$B, $\dots$, $p$B. 

For $k\in\NN$, a $(k+1)$-\emph{strategy} 
for Static Black-Peg AB~Game
consists of $k$ questions which the codebreaker has to 
ask altogether at the beginning of the game. 
These are the so-called \emph{main questions.}
Such a strategy is \emph{feasible} if
every secret $S$ is uniquely determined 
by the $k$ answers.  
Having received these answers, the codebreaker 
can ask the \emph{final question} $S$ 
to win the game.

We use the letter $k$ to denote the number of main questions, excluding
the final question. Since we will only be concerned with the main questions, in the following
we will generally omit the term ``main'', referring to the main questions
simply as questions. A $(k+1)$-strategy is called
\emph{optimal} if there is no feasible $k$-strategy.
Clearly, all questions of an optimal strategy must be distinct. 
Therefore, we shall in the following only consider strategies 
in which all questions are distinct, without explicitly mentioning 
this fact. 

\begin{remark}
Determining an optimal strategy for the Static Black-Peg AB~Game is trivial
in the case $p=1$ because the  questions $(1),(2),\dots,(c-1)$ 
reveal the secret and, obviously, no set consisting of fewer than $c-1$ questions does.
Thus, in the case $p=1$ this strategy is an optimal strategy for all $c$ (and so is
any other strategy consisting of $c-1$ questions).
In other words, there is an optimal $c$-strategy for $p=1$ 
and every $c\ge1$.
\end{remark}

We need the following definition.

\begin{definition}
\label{def1}
Let $Q=(q_1\e q_2\e \dots\e q_p)$ and $Q'=(q_1'\e q_2'\e \dots\e q_p')$ be questions.
\begin{enumerate}[label=(\alph*)]
\item
\label{def1a}
$Q$ and $Q'$ are  
called \emph{neighboring}, if $q_i=q_i'$ for some $i\in\{1,2,\dots,p\}$.
We say that they \emph{overlap} in peg $i$
and call peg $i$ an \emph{overlapping peg} (of $Q$ and $Q'$).
\item
\label{def1b}
$Q$ and $Q'$ are are \emph{double neighboring} if they overlap in at least two distinct pegs. 
\item 
\label{def1c}
$Q$ and $Q'$ are \emph{disjoint}, if $q_i\neq q_j'$ for all 
$i,j\in\{1,2,\dots,p\}$. More generally, we say that $Q$ and $Q'$ are disjoint 
in pegs $i_1,i_2,\dots,i_k$ if $\{q_{i_1},q_{i_2},\dots,q_{i_k}\}
\cap\{q_{i_1}',q_{i_2}'\dots,q_{i_k}'\}=\emptyset$.
\item 
\label{def1d}
For $ a_1,a_2,\dots,a_p \in \NN $, $Q$ is a
\emph{$(a_1,a_2,\dots,a_p)$-question}\footnote{Note 
the different notation 
in comparison to the question itself,
where we separate the numbers by the
symbol ``$\e$''.}
of a given strategy if the $i$-th color of $Q$ occurs 
exactly $a_i$ times on the $i$-th peg of the questions
of the strategy, for $i=1,2,\dots,p$.
Sometimes, for one or more $i\in\{1,2,\dots,p\}$, we do not want to specify $a_i$, 
i.e., it is not relevant how often the $i$-th color occurs on the $i$-th peg. Then
``$a_i$'' is replaced by the symbol ``$\star$''.
Finally, ``$a_i$'' can be replaced by ``$\ge a_i$'' to express that
$q_i$ occurs \emph{at least}
$a_i$ times on the $i$-th peg.
\end{enumerate}
\end{definition}

In the remainder of the paper, we investigate the cases $p=2,3$.
For both cases, the feasible strategy starts with so-called base questions for a 
small number of colors and then repeats
a copy of a fixed block of questions,
shifting the colors in each copy appropriately.
This structure makes the proof of feasibility relatively easy. However,
for the proof of optimality all possible feasible strategies have to be considered, without making specific assumptions regarding their structure, and it has to be excluded that they may require fewer questions.

\section{Two Pegs}
\label{sec_two}

In this section let $p=2$.

\subsection{A $ (\lceil 4c/3 \rceil-1) $-Strategy}
\label{ssec-two-pegs-strat}

We introduce a $ (\lceil 4c/3 \rceil-1) $-strategy for 
each $c\ge 2$ which we will later show to be feasible
and optimal. We start with presenting such a 
strategy explicitly for $c=2,3,4$ 
in Table~\ref{tab234}.
These three strategies have been found by 
brute-force computer search.

\begin{table}
\setlength{\tabcolsep}{2mm}
\centering
\begin{subtable}[b]{9em}
\begin{tabular}{c || c | c}
Peg & $1$ & $2$ 
\\
\hline
\hline
$Q_1$ & $1$ & $2$ 
\end{tabular}
\subcaption{$c=2$, $k=1$.}
\label{tab234a}
\end{subtable}
\hspace*{0.5em}
\begin{subtable}[b]{9em}
\begin{tabular}{c || c | c}
Peg & $1$ & $2$ 
\\
\hline
\hline
$Q_1$ & $1$ & $2$ 
\\
\hline
$Q_2$ & $3$ & $1$ 
\end{tabular}
\subcaption{$c=3$, $k=2$.}
\label{tab234b}
\end{subtable}
\hspace*{0.5em}
\begin{subtable}[b]{9em}
\begin{tabular}{c || c | c}
Peg & $1$ & $2$ 
\\
\hline
\hline
$Q_1$ & $1$ & $3$ 
\\
\hline
$Q_2$ & $3$ & $1$ 
\\
\hline
$Q_3$ & $2$ & $3$ 
\\
\hline
$Q_4$ & $3$ & $2$ 
\end{tabular}
\subcaption{$c=4$, $k=4$.}
\label{tab234c}
\end{subtable}
\caption{Feasible and optimal 
$ (\lceil 4c/3 \rceil-1) $-strategies 
for $ p=2$ and $ 2 \le c \le 4 $.}
\label{tab234}
\end{table}

As mentioned, we create 
$ (\lceil 4c/3 \rceil-1) $-strategies 
for all $ c \ge 2$ by starting with a
block of so-called base questions and then repeatedly
adding copies of a fixed block of questions,
shifting the colors in each copy appropriately. 
As for the base questions, we start with one of the three 
strategies of Table~\ref{tab234}.
The iterative step is based on the strategy
of Table~\ref{tab234c}, which we repeat a suitable
number of times (with shifted colors).

Concretely, let $ c \ge p=2 $ and 
$ c = 3s + t $, where $ s \in \NN_0 $, 
$ t \in \{2,3,4\}$ are uniquely determined.
For $ t=2$ we start with the $k=1$ question 
of the strategy of Table~\ref{tab234a},
for $ t=3 $ with 
the $k=2$ questions of the strategy of 
Table~\ref{tab234b} and for $ t=4 $ with 
the $k=4$ questions of the strategy of 
Table~\ref{tab234c}. 
As mentioned, we call this first group of 
questions the \emph{base questions}.

In all three cases the base questions
are followed $s $ times by 
the $4$ questions of Table~\ref{tab234c}. 
For $l=1,2,\dots,s $, 
the number $t+3\left(l-1\right)$ 
is added to the colors of the questions of
Table~\ref{tab234c}.
We call this second group of
questions the \emph{iterated questions}.

As examples, we present the corresponding
$ (\lceil 4c/3 \rceil-1) $-strategies for 
$5 \le c \le 10$ explicitly in Table~\ref{tab2510}.

\begin{table}
\setlength{\tabcolsep}{2mm}
\centering
\begin{subtable}[b]{9em}
\begin{tabular}{c || c | c}
Peg & $1$ & $2$ 
\\
\hline
\hline
$Q_1$ & $1$ & $2$ 
\\
\hline
\hline
\hline
$Q_2$ & $3$ & $5$ 
\\
\hline
$Q_3$ & $5$ & $3$ 
\\
\hline
\hline
$Q_4$ & $4$ & $5$ 
\\
\hline
$Q_5$ & $5$ & $4$ 
\end{tabular}
\subcaption{$c=5$, $k=5$.}
\label{tab2a}
\end{subtable}
\hspace*{0.5em}
\begin{subtable}[b]{9em}
\begin{tabular}{c || c | c}
Peg & $1$ & $2$ 
\\
\hline
\hline
$Q_1$ & $1$ & $2$ 
\\
\hline
$Q_2$ & $3$ & $1$ 
\\
\hline
\hline
\hline
$Q_3$ & $4$ & $6$ 
\\
\hline
$Q_4$ & $6$ & $4$ 
\\
\hline
\hline
$Q_5$ & $5$ & $6$ 
\\
\hline
$Q_6$ & $6$ & $5$ 
\end{tabular}
\subcaption{$c=6$, $k=6$.}
\label{tabb2b}
\end{subtable}
\hspace*{0.5em}
\begin{subtable}[b]{9em}
\begin{tabular}{c || c | c}
Peg & $1$ & $2$ 
\\
\hline
\hline
$Q_1$ & $1$ & $3$ 
\\
\hline
$Q_2$ & $3$ & $1$ 
\\
\hline
$Q_3$ & $2$ & $3$ 
\\
\hline
$Q_4$ & $3$ & $2$ 
\\
\hline
\hline
\hline
$Q_5$ & $5$ & $7$ 
\\
\hline
$Q_6$ & $7$ & $5$ 
\\
\hline
\hline
$Q_7$ & $6$ & $7$ 
\\
\hline
$Q_8$ & $7$ & $6$ 
\end{tabular}
\subcaption{$c=7$, $k=8$.}
\label{tabb2c}
\end{subtable}
\vspace{1.1em}
\begin{subtable}[b]{9em}
\begin{tabular}{c || c | c}
Peg & $1$ & $2$ 
\\
\hline
\hline
$Q_1$ & $1$ & $2$ 
\\
\hline
\hline
\hline
$Q_2$ & $3$ & $5$ 
\\
\hline
$Q_3$ & $5$ & $3$ 
\\
\hline
\hline
$Q_4$ & $4$ & $5$ 
\\
\hline
$Q_5$ & $5$ & $4$ 
\\
\hline
\hline
\hline
$Q_6$ & $6$ & $8$ 
\\
\hline
$Q_7$ & $8$ & $6$ 
\\
\hline
\hline
$Q_8$ & $7$ & $8$ 
\\
\hline
$Q_9$ & $8$ & $7$ 
\end{tabular}
\subcaption{$c=8$, $k=9$.}
\label{tabb2d}
\end{subtable}
\hspace*{0.5em}
\begin{subtable}[b]{9em}
\begin{tabular}{c || c | c}
Peg & $1$ & $2$ 
\\
\hline
\hline
$Q_1$ & $1$ & $2$ 
\\
\hline
$Q_2$ & $3$ & $1$ 
\\
\hline
\hline
\hline
$Q_3$ & $4$ & $6$ 
\\
\hline
$Q_4$ & $6$ & $4$ 
\\
\hline
\hline
$Q_5$ & $5$ & $6$ 
\\
\hline
$Q_6$ & $6$ & $5$ 
\\
\hline
\hline
\hline
$Q_7$ & $7$ & $9$ 
\\
\hline
$Q_8$ & $9$ & $7$ 
\\
\hline
\hline
$Q_9$ & $8$ & $9$ 
\\
\hline
$Q_{10}$ & $9$ & $8$ 
\end{tabular}
\subcaption{$c=9$, $k=10$.}
\label{tabb2e}
\end{subtable}
\hspace*{0.5em}
\begin{subtable}[b]{9em}
\begin{tabular}{c || r | r}
Peg & $1$ & $2$ 
\\
\hline
\hline
$Q_1$ & $1$ & $3$ 
\\
\hline
$Q_2$ & $3$ & $1$ 
\\
\hline
$Q_3$ & $2$ & $3$ 
\\
\hline
$Q_4$ & $3$ & $2$ 
\\
\hline
\hline
\hline
$Q_5$ & $5$ & $7$ 
\\
\hline
$Q_6$ & $7$ & $5$ 
\\
\hline
\hline
$Q_7$ & $6$ & $7$ 
\\
\hline
$Q_8$ & $7$ & $6$ 
\\
\hline
\hline
\hline
$Q_9$ & $8$ & $10$ 
\\
\hline
$Q_{10}$ & $10$ & $8$ 
\\
\hline
\hline
$Q_{11}$ & $9$ & $10$ 
\\
\hline
$Q_{12}$ & $10$ & $9$ 
\end{tabular}
\subcaption{$c=10$, $k=12$.}
\label{tabb2f}
\end{subtable}
\caption{Feasible and optimal 
$ (\lceil 4c/3 \rceil-1) $-strategies 
for $ p=2$ and $ 5 \le c \le 10 $.}
\label{tab2510}
\end{table}

In Subsections~\ref{ssec-two-pegs-feas}
and~\ref{ssec-two-pegs-opt} we prove
the following theorem.

\begin{theorem}
\label{thp2}
The presented strategy 
is a \emph{feasible} and \emph{optimal} 
$ (\lceil 4c/3 \rceil-1) $-strategy 
for $p=2$ and for the corresponding $c \ge 2 $.
\end{theorem}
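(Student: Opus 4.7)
I would separate the proof into feasibility and optimality, using quite different techniques for the two.

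For feasibility, the natural approach is induction on $c$ (equivalently, on the number $s$ of iterated blocks). The base cases $c \in \{2,3,4\}$ reduce to a direct enumeration of the $c(c-1)$ secrets against the strategies of Table~\ref{tab234}, checking that the response vectors are pairwise distinct. For the inductive step, assume the strategy for $c-3$ colors is feasible, and note that the strategy for $c$ colors is obtained by appending to it the four-question block $(c-2\e c), (c\e c-2), (c-1\e c), (c\e c-1)$ using the three new colors $c-2, c-1, c$, in which $c$ acts as a ``hub'' appearing twice on each peg. For any secret $S=(s_1\e s_2)$, three cases arise. If both $s_1,s_2 \in \{1,\dots,c-3\}$, all four new block questions return $0$B, and the inductive hypothesis handles the distinguishability via the old questions. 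If both $s_1,s_2 \in \{c-2,c-1,c\}$, a short direct verification shows that the six such secrets give six pairwise distinct response vectors on the block's four questions. Finally, if exactly one coordinate is new, the block's pattern of $0$B and $1$B answers identifies uniquely which new color is present and on which peg, while the old questions determine the remaining old coordinate by the inductive hypothesis (together with the fact that at most one old color can be missing from each peg).

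For optimality, we must show that no feasible strategy uses fewer than $\lceil 4c/3 \rceil - 2$ main questions. My plan is a counting argument on color appearance profiles. Let $n_i^j$ denote the number of questions in which color $i$ occurs on peg $j$, so that $\sum_{i=1}^c n_i^j = k$ for each $j \in \{1,2\}$. A simple but weak constraint is that at most one color can be missing from each peg --- if two colors were both missing from the same peg, the two secrets differing only in that peg's coordinate would be indistinguishable --- giving only $k \ge c-1$. To reach the required $k \ge \lceil 4c/3 \rceil - 2$, one needs the stronger conclusion that at most roughly $c/3$ colors can have total multiplicity $n_i^1 + n_i^2 = 2$ (i.e., appear exactly once on each peg and nowhere else), the remaining $\ge 2c/3$ colors having total multiplicity $\ge 3$. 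This should be derived by exhibiting, for each ``low-multiplicity'' color, a matching pair of secrets that can only be separated by an additional occurrence of some accompanying color, and then charging these extra occurrences against the color budget. Summing yields $2k \ge 8c/3 - O(1)$, which tightens to $k \ge \lceil 4c/3 \rceil - 2$.

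The main obstacle is the optimality proof, because it must hold for all conceivable strategies and not merely our construction. Pinning down the correct ``ambiguous pair'' for each low-multiplicity color --- one whose resolution genuinely requires an extra question --- is the delicate step; for this, the classification of questions by $(a_1,a_2)$-types in Definition~\ref{def1} should prove a useful bookkeeping device. The argument will likely split into the three residue classes of $c \bmod 3$, with separate verification for the base $c \le 4$ to handle boundary effects arising from the $\lceil\cdot\rceil$ operation and the three possible initial base blocks $t \in \{2,3,4\}$.
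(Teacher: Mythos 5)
Your feasibility argument (induction on the number of iterated blocks, splitting according to whether the secret's coordinates use old or new colors) is a workable repackaging of what the paper does more directly via its classification into $(1,2)$-, $(2,1)$- and $(1,1)$-questions; both hinge on the same two facts, namely that a non-empty answer to an iterated question pins down one peg and that at most one color is missing from each peg, so that half is sound in outline.

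The optimality half, however, rests on a false lemma. You claim that at most roughly $c/3$ colors can have total multiplicity $n_i^1+n_i^2=2$, the remaining colors having total multiplicity at least $3$. The paper's own optimal strategy refutes this: for $c=9$ (Table~\ref{tabb2e}) the colors $1,4,5,7,8$ --- five out of nine, well over $c/3$ --- each occur exactly once on each peg and nowhere else, while colors $2$ and $3$ have total multiplicity $1$ and only $6$ and $9$ reach multiplicity $4$; summing gives $2k=20$, strictly below your target of $8c/3=24$. So the color-multiplicity bound you propose cannot be proved: were it true, it would force $k\ge 4c/3$ and contradict the existence of the feasible $10$-question strategy. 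The flaw is that a color occurring once on each peg need not be ``cheap'': if its two occurrences lie in two \emph{different} questions, it still consumes two question-slots. The quantity that must be bounded is therefore attached to questions, not colors. Writing $l_i$ for the number of colors occurring exactly once on peg $i$, every strategy has at least $l_1+l_2-m$ questions, where $m$ is the number of $(1,1)$-questions in the sense of Definition~\ref{def1}\ref{def1d}, i.e., questions both of whose entries are once-occurring \emph{in the same question}. The crux of the paper's proof is that $m\le 3$: no two $(1,1)$-questions can be disjoint (else swapping their coordinates yields two indistinguishable secrets), which forces any three of them into a $3$-cycle and excludes a fourth (Lemma~\ref{lem1}\ref{item1b}--\ref{item1e}). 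Your plan contains no substitute for this step, and without it the residue-class case analysis cannot close.
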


In the following let 
$ h \equiv c\ \! \mod 3$.

\begin{remark}
\begin{enumerate}[label=(\alph*)]
\item For $ h\in\{0,2\}$,
our presented strategy 
for the Static Black-Peg AB~Game 
needs \emph{one question less} than the strategy
of~\cite{Jag16} for Static Black-Peg Mastermind
(see Strategies~1 and~3, respectively, in~\cite{Jag16}).
\item For $ h=1$,
our presented strategy for the Static Black-Peg 
AB~Game needs \emph{the same number of questions} 
as the strategy of~\cite{Jag16} 
for Static Black-Peg Mastermind
(see Strategy~2 in~\cite{Jag16}).
\end{enumerate}
\end{remark}

\subsection{Feasibility of the $ (\lceil 4c/3 \rceil-1) $-Strategy}
\label{ssec-two-pegs-feas}

Analyzing the $ (\lceil 4c/3 \rceil-1) $-strategy, we observe the
following:

\begin{observation}
\label {obs1}
\begin{enumerate}[label=(\alph*)]
\item For $h=0$, the following holds:

The $ (\lceil 4c/3 \rceil-1) $-strategy
contains only $ (1,2) $-questions and $ (2,1) $-questions,
except the first two questions 
$ Q_1 = (1\e2) $ and $ Q_2 = (3\e 1) $ which are both $ (1,1)$-questions.

On the first peg only the color $2$ is missing
(throughout the entire strategy), 
and on the second peg only the color $3$ is missing.

\item For $h=1$, the following holds:

The $ (\lceil 4c/3 \rceil-1) $-strategy
contains only $ (1,2) $-questions and $ (2,1) $-questions.

Both on the first and the second peg only the color $4$ is missing.

\item For $h=2$, the following holds:

The $ (\lceil 4c/3 \rceil-1) $-strategy
contains only $ (1,2) $-questions and $ (2,1) $-questions,
except the first question $ Q_1 = (1\e 2) $ which is a $ (1,1)$-question.

On the first peg only the color $2$ is missing, 
and on the second peg only the color $1$ is missing.

\end{enumerate}
\end{observation}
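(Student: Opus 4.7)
The plan is to reduce Observation~\ref{obs1} to a bookkeeping argument that exploits the disjoint color-shift structure of the construction. First I would translate the residue $h$ into the construction parameter $t$: writing $c = 3s+t$ with $t\in\{2,3,4\}$ gives $h \equiv t \pmod 3$, so the cases $h=2,0,1$ correspond to $t=2,3,4$, respectively. With this dictionary the three items of the observation become three parallel computations of the same kind.

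The central lemma I would establish once and reuse is an analysis of a generic iterated block. For shift $a$, the block consists of the four questions $(a+1\e a+3)$, $(a+3\e a+1)$, $(a+2\e a+3)$, $(a+3\e a+2)$; a direct count shows that on each peg the colors $a+1$ and $a+2$ appear exactly once while color $a+3$ appears exactly twice. Since the triples $\{a_l+1,a_l+2,a_l+3\}$ with $a_l = t+3(l-1)$ partition $\{t+1,\dots,c\}$ as $l=1,\dots,s$ varies, and are disjoint from the base colors $\{1,\dots,t\}$, these per-block counts already equal the final peg-multiplicities of every iterated color. Consequently, every iterated question is automatically a $(1,2)$- or $(2,1)$-question, and no color from $\{t+1,\dots,c\}$ is missing on either peg.

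It then remains to inspect the three bases. For $t=2$ the single base question $(1\e 2)$ contributes multiplicity $1$ to color $1$ on peg $1$ and to color $2$ on peg $2$, so it is a $(1,1)$-question and leaves color $2$ missing on peg $1$ and color $1$ missing on peg $2$. For $t=3$ the base $(1\e 2),(3\e 1)$ places color $1$ once on each peg, color $2$ only on peg $2$, and color $3$ only on peg $1$, hence both base questions are $(1,1)$-questions and the missing colors are $2$ on peg $1$ and $3$ on peg $2$. For $t=4$ the base is literally a copy of the iterated block at shift $a=0$, so the generic block analysis immediately yields that every base question is a $(1,2)$- or $(2,1)$-question, and since color $4$ appears in neither the base (using colors $1,2,3$) nor any iterated block (which use colors $\ge 5$), it is the unique missing color on both pegs.

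There is no real obstacle; the only care required will be to recognise that the $t=4$ base exactly duplicates the iterated-block multiplicity pattern, so it folds into the uniform block analysis, and that different iterated blocks use pairwise disjoint color sets that are also disjoint from the base colors, so multiplicities outside a given block never perturb the per-block counts.
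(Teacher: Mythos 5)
Your proof is correct and is essentially the explicit form of the verification the paper leaves implicit (the paper states Observation~\ref{obs1} as a direct inspection of the construction without a written proof). Your organization into a generic shifted-block lemma plus a case check of the three bases, including the correct handling of the $t=4$ base (which uses only colors $1,2,3$, leaving color $4$ absent everywhere), accurately reflects how the observation follows from the disjoint color-shift structure.
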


Consider one fixed $ (1,2) $-question or $ (2,1)$-question, and
assume that it receives a non-empty answer $2$B or $1$B, respectively. 
Then the following conclusions can be drawn.
\begin{enumerate}[label=(\Roman*)]
\item The answer is $1$B.

Here we have to find out which one of the
two pegs is correct. This is determined by the answer
to the neighboring question: if that answer 
is also non-empty, then the color of the overlapping 
peg is correct, otherwise the color
of the other peg is correct.

As an example, consider the strategy
for $c=9$ in Table~\ref{tabb2e}. 

For the secret $ (4 \e 9) $, the answer
to the question $ Q_3 = (4 \e 6) $ is $1$B.
The neighboring question to $Q_3$ is $ Q_5 = 
(5 \e 6) $.
As $Q_5$ gives the answer $0$B, we know that color $4$ is
correct on the first peg, but we do not know yet 
the color of the second peg.

On the other hand, for the secret $ (3,6) $, the answer
to the question $ Q_3 $ is $1$B 
and to $ Q_5 $ also $1$B.
Then we know that color $6$ is correct on the second peg. 

\item The answer is $2$B.

The secret is found.
\end{enumerate}

So as soon we have a non-empty answer to a $ (1,2)$-question
or to a $ (2,1) $-question, we know the color of one peg.
However, having this information about one peg, also the other peg
can be determined from the answers to all questions of the strategy,
as by Observation~\ref{obs1}) on each peg only one color is missing. 

So we have shown the feasibility for the case that 
at least one $ (1,2) $-question or at least one 
$ (2,1)$-question receives a non-empty answer.
To show the feasibility also for the remaining case 
that all $ (1,2) $-questions and all 
$ (2,1)$-questions receive an empty answer,
we distinguish between $ h=0,1,2$.

\begin{enumerate}[label=(\Roman*)]
\item $ h= 0$:

Six possible secrets are consistent with
answers $0$B to all $ (1,2) $-questions and $ (2,1)$-questions. 
We list them and the combination of answers to 
the $ (1,1) $-questions $Q_1 = (1 \e 2) $ 
and $ Q_2 = (3 \e 1) $:

\begin{enumerate}[label=(\roman*)]
\item Secret $ (1 \e 2) $: Combination of answers  $ (2\mbox{B},0\mbox{B})$.
\item Secret $ (1 \e 3) $: Combination of answers  $ (1\mbox{B},0\mbox{B})$.
\item Secret $ (2 \e 1) $: Combination of answers  $ (0\mbox{B},1\mbox{B})$.
\item Secret $ (2 \e 3) $: Combination of answers  $ (0\mbox{B},0\mbox{B})$.
\item Secret $ (3 \e 1) $: Combination of answers  $ (0\mbox{B},2\mbox{B})$.
\item Secret $ (3 \e 2) $: Combination of answers  $ (1B,1B)$.
\end{enumerate}

\item $ h=1$:

Only one possible secret is consistent with
the answer $0$B to all $ (1,2) $- questions and $ (2,1)$-questions,
namely $ (4 \e 4) $.

\item $ h=2$:

Two possible secrets are consistent with
the answer $0$B to all $ (1,2) $- and $ (2,1)$-questions,
namely $ (1 \e 2) $ and $(2 \e 1)$. However, these are distinguished
by the only $ (1,1)$-question $ Q_1 = (1 \e 2) $.

\end{enumerate}

Thus, each possible secret is uniquely determined in all three cases,
which finishes the proof of feasibility.

\subsection{Optimality of the $ (\lceil 4c/3 \rceil-1) $-Strategy}
\label{ssec-two-pegs-opt}

We need the following lemma.

\begin{lemma}
\label{lem1}
For the Static Black-Peg AB~Game with $p=2$ the following statements hold:

\begin{enumerate}[label=(\alph*)]
\item
\label{item1a}
For each feasible strategy and for each peg there exists at most
one color which does not occur on this peg. 
\item
\label{item1b}
A feasible strategy cannot contain two disjoint
$ (1,1) $-questions.
\item 
\label{item1c}
If a feasible strategy contains 
three $(1,1)$-questions, by possibly permuting the colors,
these questions can be written in the form 
$ (1 \e 2) $, $ (2 \e 3) $, $ (3\e 1) $.
\item
\label{item1d}
If a feasible strategy contains 
three $(1,1)$-questions, 
then on at least one of the two pegs of the  
questions, all $c$ colors must occur.
\item
\label{item1e}
A feasible strategy cannot contain 
four $(1,1)$-questions. 
\end{enumerate}

\end{lemma}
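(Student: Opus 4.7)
The plan is to prove each item by contradiction, exhibiting in each case two distinct valid secrets that yield the same answer sequence, which contradicts feasibility. For (a), if colors $x \ne y$ were both missing from peg $1$, I would pick any color $z \notin \{x, y\}$ (possible whenever $c \ge 3$; for $c = 2$ the two colors missing would leave peg $1$ empty, already contradicting feasibility) and compare $S_1 = (x \e z)$ with $S_2 = (y \e z)$: neither can match any question on peg $1$, so the answer to every question depends only on its peg-$2$ entry and coincides for both. For (b), given disjoint $(1,1)$-questions $Q = (a \e b)$ and $Q' = (a' \e b')$, I would use the \emph{cross secrets} $S_1 = (a \e b')$ and $S_2 = (a' \e b)$: each gives $1$B to both $Q$ and $Q'$, and the $(1,1)$-property forces every remaining question to avoid $a, a'$ on peg $1$ and $b, b'$ on peg $2$, so both give $0$B elsewhere; disjointness also guarantees validity and $S_1 \ne S_2$.

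For (c), between any pair of $(1,1)$-questions the $(1,1)$-property forbids same-peg coincidences ($a_i \ne a_j$, $b_i \ne b_j$), while (b) forbids disjointness. Hence every pair must meet in at least one \emph{cross equality} $a_i = b_j$ or $a_j = b_i$. A short case analysis rules out meeting in \emph{both}: otherwise that pair is $(x \e y), (y \e x)$ and any third $(1,1)$-question would have both of its colors distinct from $x$ and $y$ on each peg, making it disjoint from $(x \e y)$ and violating (b). So every pair meets in exactly one cross. Fixing $Q_1 = (1 \e 2)$ by renaming and following the forced chain of cross equalities through $Q_2, Q_3$, I arrive at $Q_2 = (3 \e 1)$ and $Q_3 = (2 \e 3)$; up to reordering, this is the canonical triangle $(1 \e 2), (2 \e 3), (3 \e 1)$.

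Part (d) is the \textbf{main obstacle}, because the choice of indistinguishable pair is not forced by the setup. Starting from the canonical form of (c), suppose $x$ is missing from peg $1$ and $y$ from peg $2$; since $1, 2, 3$ all appear on both pegs through the canonical questions, necessarily $x, y \ge 4$. The pair I would use is $S_1 = (x \e 3)$ and $S_2 = (2 \e y)$. Both produce answers $0$B, $1$B, $0$B on $(1 \e 2), (2 \e 3), (3 \e 1)$, respectively, by direct check. On any further question $Q = (q_1 \e q_2)$, $S_1$ cannot match on peg $1$ (since $x$ is absent there) and matches on peg $2$ only if $q_2 = 3$; but $(2 \e 3)$ is a $(1,1)$-question, so color $3$ appears on peg $2$ nowhere else, giving $0$B. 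Symmetrically, $S_2$ gives $0$B on $Q$, now using that color $2$ occurs on peg $1$ only in $(2 \e 3)$. Thus $S_1$ and $S_2$ are distinct valid secrets with identical answer sequences, contradicting feasibility.

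Finally, (e) follows in one line from (b) and (c): if a feasible strategy contained four $(1,1)$-questions, three of them could be put into canonical form $(1 \e 2), (2 \e 3), (3 \e 1)$ by (c); the fourth $(a \e b)$ would then satisfy $a, b \notin \{1, 2, 3\}$ by the $(1,1)$-property, so it would be disjoint from $(1 \e 2)$, contradicting (b). The technical core of the argument therefore sits in (d), where the crucial observation is that the $(1,1)$-question $(2 \e 3)$ is the unique place where color $2$ on peg $1$ and color $3$ on peg $2$ occur, which lets one trade the missing color on one peg for a present color on the other without disturbing any answer.
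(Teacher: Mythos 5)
Your proposal is correct and follows essentially the same route as the paper: exhibiting two indistinguishable valid secrets for (a), (b), (d), reducing three $(1,1)$-questions to the canonical triangle for (c), and deriving (e) from (b) and (c). The only cosmetic differences are that your (c) is organized as a pairwise cross-equality analysis rather than a direct enumeration of $\{q_1',q_2',q_3'\}$, and your (d) uses the pair $(x\e 3)$, $(2\e y)$ anchored at $(2\e 3)$ instead of the paper's $(1\e b)$, $(a\e 2)$ anchored at $(1\e 2)$, which is the same construction up to a cyclic rotation of the triangle.
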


\begin{proof}
\begin{enumerate}
\item
The assertion is clear for $c=2$.
Thus, let $ c \ge 3 $.
Assume without loss of generality that there are two colors $a$
and $b$ which do not occur on the first peg. Choose an arbitrary
color~$x$ which is neither $a$ nor $b$ ($x$ exists 
because $ c \ge 3$).  Then the 
possible secrets
$ (a\e x) $ and $ (b \e x) $ receive the same combination
of answers. Thus, the two possible secrets are indistinguishable,
contradicting the feasibility of the strategy.

By symmetry, this also proves the corresponding statement for the second peg.
\item[\ref{item1b}]
Suppose that the strategy contains two disjoint
$(1,1)$-questions. So their four colors
are distinct, say
$ (1\e 2) $ and $ (3 \e 4) $, and thus $ c \ge 4 $.
Then the possible secrets $ (1 \e 4) $ 
and $ (3 \e 2) $ receive the same answer ($1$B) 
for both questions $ (1 \e 2) $ and $ (3 \e 4) $, 
and also the same answer ($0$B) for all other questions. 
Thus, the two possible secrets are  
indistinguishable, leading to a contradiction. 
\item[\ref{item1c}]
Let a feasible strategy contain three
$(1,1)$-questions, say
$ (q_1 \e q_1') $, $ (q_2 \e q_2') $ and $ (q_3 \e q_3') $.
Without loss of generality, we may permute colors so that $ q_1 = 1$, $ q_2 = 2 $, $ q_3 = 3 $. (Note that $q_1,q_2,q_3$ are distinct by the definition of $ (1,1) $-questions, and so are $q_1',q_2',q_3'$.)

If some $q_i'$, say $q_1'$, differs from $1, 2$ and $3$,
then not both $q_2' $ and $q_3'$
can be equal to $1$. Thus,
in this case $(1 \e q_1')$ and $(2 \e q_2')$, or
$(1 \e q_1')$ and $(3 \e q_3')$ would be
disjoint. By~\ref{item1b}, this contradicts the feasibility of
the strategy, thus showing that $\{q_1',q_2',q_3'\}=\{1,2,3\}$.

As $q_i=i$, the definition of
the AB~Game yields $q_i'\neq i$, which means that
$$q_1'\in\{2,3\},\ q_2'\in\{1,3\},\ q_3'\in\{1,2\},$$
which leaves only two possibilities, namely the claimed
$$
  (q_1 \e q_1') = (1\e2),\ (q_2 \e q_2')=(2\e 3),\ (q_3 \e q_3')=(3\e1)
$$
or
$$
  (q_1 \e q_1') = (1\e3),\ (q_2 \e q_2')=(2\e 1),\ (q_3 \e q_3')=(3\e2).
$$
In the second case, the claimed situation is obtained by switching colors 
$2$ and $3$ and switching the second and the third question.

\item[\ref{item1d}]
By~\ref{item1c}, a feasible strategy
may be assumed to contain three
$(1,1)$-questions $ (1 \e 2) $, $ (2 \e 3) $, 
and $ (3 \e 1) $.
Suppose that there is a color $a$ which does not
occur on the first peg of the questions
and a color $b$ which does not
occur on the second peg of the questions.
Then the possible secrets $ (1 \e b) $ 
and $ (a \e 2) $
receive the answer $1$B for
the question $ (1 \e 2) $, and $0$B
for all other questions, i.e.,
the same combination of answers. Thus,
these possible secrets $ (1 \e b) $ and 
$ (a \e 2) $ are
indistinguishable -- a contradiction.

\item[\ref{item1e}]
If a feasible strategy contains four
$(1,1)$-questions, by~\ref{item1c} the first three
of them can be written in the form
$ (1 \e 2) $, $ (2 \e 3) $, and $ (3 \e 1) $.
Adding a fourth $ (1,1) $-question would automatically
lead to two disjoint $ (1,1) $-questions.
This contradicts~\ref{item1b}. 
\hfill $ \square $
\end{enumerate}
\end{proof}

Now we come to the final part of the optimality proof.
Note that the presented strategy
uses $k=\lceil 4c/3 \rceil-2$ questions.\footnote{Recall
that $k$ does not include the final question.}
In the following we prove that every feasible strategy
consists of at least $k$ questions.
Thus, consider any feasible strategy. 
Let $l_i$ be the number of colors which
occur exactly once on the $i$-th peg of this strategy
for $i=1,2$. Furthermore, let $m$ be the number of $(1,1)$-questions of the strategy.
Observe that the total number of questions is at least $l_1+l_2-m$.
Again we distinguish between $ h=0,1,2$.

\begin{enumerate}[label=(\Roman*)]
\item $ h=0$.

We have
\begin{eqnarray*}
k & = & \frac{4c}{3} -2.
\end{eqnarray*}
On the one hand, if $ l_i \le 2c/3 $ for some $i\in\{1,2\}$, since at least $c-1$ colors
occur on peg~$i$ (see Lemma~\ref{lem1}\ref{item1a}),
we have at least 
\begin{eqnarray*}
\frac{2c}{3} + 2 \cdot 
\left( \frac{c}{3}-1 \right) \; \ge \; 
\frac{4c}{3}-2 \; = \; k
\end{eqnarray*}
questions.

If, on the other hand, $ l_i \ge \frac{2c}{3}+1 $ for $i=1,2$ it follows that there are at least
\begin{eqnarray*}
l_1+l_2-m\;\ge\; 2 \cdot \left(\frac{2c}{3}+1\right) - 3
\; \ge \; \frac{4c}{3}-1\; = \; k+1 
\end{eqnarray*}
questions, because we know from Lemma~\ref{lem1}\ref{item1e}
that $m\le3$.

\item $ h=1$.

We have
\begin{eqnarray*}
k & = & \frac{4c-1}{3} -1.
\end{eqnarray*}

We distinguish between two sub-cases.

\begin{enumerate}[label=(\roman*)]
\item On both pegs only $c-1$ colors occur.

On the one hand, analogously to the case $h=0$,
if $ l_i \le \frac{2c-2}{3} $ for some $i\in\{1,2\}$,
we have at least 
\begin{eqnarray}
\label{prev_eq}
\frac{2c-2}{3} + 2 \cdot 
\left( \frac{c-1}{3}\right) \; 
\ge \; \frac{4c-1}{3}-1 \; = \; k
\end{eqnarray}
questions. 

If instead $ l_i \ge \frac{2c+1}{3} $ for $i=1,2$, we have
\begin{eqnarray*}
l_1+l_2-m\;\ge\; 2 \cdot \left(\frac{2c+1}{3}\right) - 2 \; \ge \; 
\frac{4c}{3} +\frac{2}{3} - \frac 63 \; = \; k,
\end{eqnarray*}
because we know from
Lemma~\ref{lem1}\ref{item1d} that $m\le 2$.

\item On at least one peg, say peg~$1$, all $c$ colors occur.

If $ l_1 \le \frac{2c+1}{3} $, then there are at least 
\begin{eqnarray*}
\frac{2c+1}{3} + 2 
\cdot \left( \frac{c-1}{3}\right) \; 
\ge \; \frac{4c-1}{3} \; = \; k+1
\end{eqnarray*}
questions.

Recall also from Eq.~\eqref{prev_eq} that
we have at least $k$ questions if $ l_2 \le \frac{2c-2}{3} $,
and assume thus that $ l_1 \ge \frac{2c+4}{3} $ and
$ l_2 \ge \frac{2c+1}{3} $. Since $m\le3$ by
Lemma~\ref{lem1}\ref{item1e}, it follows that
\begin{eqnarray*}
l_1+l_2-m\;\ge\;\frac{2c+4}{3} + \frac{2c+1}{3} - 3
\; = \; \frac{4c-1}{3} - 1\; = \; k.
\end{eqnarray*}
\end{enumerate}

\item $ h=2$.

We have
\begin{eqnarray*}
k & = & \frac{4c-2}{3} -1.
\end{eqnarray*}
Analogously to the previous cases, if $ l_i \le \frac{2c-1}{3} $
for some $i\in\{1,2\}$,
then we have at least 
\begin{eqnarray*}
\frac{2c-1}{3} + 2 \cdot 
\left( \frac{c-2}{3}\right) \; \ge \; 
\frac{4c-2}{3}-1 \; = \; k
\end{eqnarray*}
questions.

It remains to consider the case where
$ l_i \ge \frac{2c+2}{3} $ for $i=1,2$. Again using the fact that
$m\le3$ by Lemma~\ref{lem1}\ref{item1e}, it follows that
\begin{eqnarray*}
l_1+l_2-m\;\ge\; 2 \cdot \left(\frac{2c+2}{3}\right) - 3
\; = \; \frac{4c+4}{3} - 3\; = \; \frac{4c-2}{3} - 1 \;=\; k,
\end{eqnarray*}
which completes the last case and thus the proof. 
\hfill $ \square $
\end{enumerate}

\section{Three Pegs}
\label{sec_three}

In this section let $p=3$.

\subsection{A $ \lfloor (3c-1)/2 \rfloor $-Strategy}
\label{ssec-three-pegs-strat}

We introduce a $ \lfloor (3c-1)/2 \rfloor $-strategy 
for each $c\ge 4$ which we will later show to be feasible
and optimal. Interestingly, such a feasible strategy
does not exist for $c=3$, i.e., no feasible
$ 4 $-strategy exists~\footnote{Observe that 
$ (\lfloor (3 \cdot 3 -1)/2 \rfloor) = 4 $, but this
includes the final question.}
although there are only $ 3! = 6 $
possible secrets. One feasible and optimal $5$-strategy
for $c=3$ is shown in Table~\ref{tab31}.

\begin{table}
\setlength{\tabcolsep}{2mm}
\centering
\begin{tabular}{c || r | r | r}
Peg & $1$ & $2$ & $3$ 
\\
\hline
\hline
$Q_1$ & $1$ & $2$ & $3$ 
\\
\hline
$Q_2$ & $1$ & $3$ & $2$ 
\\
\hline
$Q_3$ & $2$ & $1$ & $3$ 
\\
\hline
$Q_4$ & $2$ & $3$ & $1$ 
\end{tabular}
\caption{Feasible and optimal strategy for $p=3$,  $c=3$, $k=4$.}
\label{tab31}
\end{table}

We start by explicitly presenting
$ \lfloor (3c-1)/2 \rfloor $-strategies for $c=4,5,6,7,8,9$ in Table~\ref{tab3}. 
These six strategies have been found by brute-force computer search.

\begin{table}
\setlength{\tabcolsep}{2mm}
\centering
\begin{subtable}[b]{9em}
\begin{tabular}{c || c | c | c}
Peg & $1$ & $2$ & $3$ 
\\
\hline
\hline
$Q_1$ & $1$ & $2$ & $3$ 
\\
\hline
$Q_2$ & $1$ & $3$ & $4$ 
\\
\hline
$Q_3$ & $3$ & $2$ & $4$ 
\\
\hline
$Q_4$ & $2$ & $4$ & $1$ 
\\
\end{tabular}
\subcaption{$c=4$, $k=4$.}
\label{tab3a}
\end{subtable}
\hspace*{0.5em}
\begin{subtable}[b]{9em}
\begin{tabular}{c || c | c | c}
Peg & $1$ & $2$ & $3$ 
\\
\hline
\hline
$Q_1$ & $1$ & $3$ & $4$ 
\\
\hline
$Q_2$ & $2$ & $3$ & $4$ 
\\
\hline
$Q_3$ & $3$ & $1$ & $5$ 
\\
\hline
$Q_4$ & $4$ & $2$ & $5$ 
\\
\hline
$Q_5$ & $3$ & $5$ & $1$ 
\\
\hline
$Q_6$ & $4$ & $5$ & $3$ 
\\
\end{tabular}
\subcaption{$c=5$, $k=6$.}
\label{tab3b}
\end{subtable}
\hspace*{0.5em}
\begin{subtable}[b]{9em}
\begin{tabular}{c || c | c | c}
Peg & $1$ & $2$ & $3$ 
\\
\hline
\hline
$Q_1$ & $1$ & $2$ & $3$ 
\\
\hline
$Q_2$ & $1$ & $3$ & $2$ 
\\
\hline
$Q_3$ & $2$ & $1$ & $3$ 
\\
\hline
$Q_4$ & $2$ & $4$ & $1$ 
\\
\hline
$Q_5$ & $3$ & $5$ & $2$ 
\\
\hline
$Q_6$ & $5$ & $4$ & $6$ 
\\
\hline
$Q_7$ & $6$ & $5$ & $4$ 
\\
\end{tabular}
\subcaption{$c=6$, $k=7$.}
\label{tab3c}
\end{subtable}
\vspace*{1.1em}
\begin{subtable}[b]{9em}
\begin{tabular}{c || c | c | c}
Peg & $1$ & $2$ & $3$ 
\\
\hline
\hline
$Q_1$ & $1$ & $2$ & $7$ 
\\
\hline
$Q_2$ & $4$ & $1$ & $7$ 
\\
\hline
$Q_3$ & $2$ & $7$ & $5$ 
\\
\hline
$Q_4$ & $5$ & $7$ & $4$ 
\\
\hline
$Q_5$ & $7$ & $3$ & $2$ 
\\
\hline
$Q_6$ & $7$ & $4$ & $3$ 
\\
\hline
$Q_7$ & $6$ & $5$ & $1$ 
\\
\hline
$Q_8$ & $3$ & $6$ & $1$ 
\\
\hline
$Q_9$ & $3$ & $5$ & $6$ 
\\
\end{tabular}
\subcaption{$c=7$, $k=9$.}
\label{tab3d}
\end{subtable}
\hspace*{0.5em}
\begin{subtable}[b]{9em}
\begin{tabular}{c || c | c | c}
Peg & $1$ & $2$ & $3$ 
\\
\hline
\hline
$Q_1$ & $6$ & $5$ & $4$ 
\\
\hline
$Q_2$ & $3$ & $1$ & $5$ 
\\
\hline
$Q_3$ & $7$ & $6$ & $4$ 
\\
\hline
$Q_4$ & $8$ & $2$ & $6$ 
\\
\hline
$Q_5$ & $2$ & $4$ & $6$ 
\\
\hline
$Q_6$ & $2$ & $7$ & $5$ 
\\
\hline
$Q_7$ & $4$ & $1$ & $3$ 
\\
\hline
$Q_8$ & $8$ & $5$ & $2$ 
\\
\hline
$Q_9$ & $1$ & $6$ & $7$ 
\\
\hline
$Q_{10}$ & $4$ & $3$ & $8$ 
\\
\end{tabular}
\subcaption{$c=8$, $k=10$.}
\label{tab3e}
\end{subtable}
\hspace*{0.5em}
\begin{subtable}[b]{9em}
\begin{tabular}{c || c | c | c}
Peg & $1$ & $2$ & $3$ 
\\
\hline
\hline
$Q_1$ & $3$ & $1$ & $4$ 
\\
\hline
$Q_2$ & $2$ & $1$ & $3$ 
\\
\hline
$Q_3$ & $4$ & $2$ & $3$ 
\\
\hline
$Q_4$ & $1$ & $2$ & $4$ 
\\
\hline
$Q_5$ & $5$ & $7$ & $8$ 
\\
\hline
$Q_6$ & $5$ & $6$ & $7$ 
\\
\hline
$Q_7$ & $6$ & $8$ & $7$ 
\\
\hline
$Q_8$ & $7$ & $5$ & $8$ 
\\
\hline
$Q_9$ & $7$ & $3$ & $1$ 
\\
\hline
$Q_{10}$ & $7$ & $3$ & $5$ 
\\
\hline
$Q_{11}$ & $8$ & $9$ & $2$ 
\\
\hline
$Q_{12}$ & $8$ & $4$ & $9$ 
\\
\end{tabular}
\subcaption{$c=9$, $k=12$.}
\label{tab3f}
\end{subtable}
\caption{Feasible and optimal $(\lfloor (3c-1)/2 \rfloor)$-strategies 
for $ p=3$ and $ 4 \le c \le 9 $.}
\label{tab3}
\end{table}

Then we create 
$ \lfloor (3c-1)/2 \rfloor $-strategies for all $ c \ge 4$ in a way
similar to the case $p=2$.
We start with one of the 
six strategies of Table~\ref{tab3} and append the strategy of
Table~\ref{tab36} a suitable number of times,
shifting the colors of each such block of questions appropriately.

\begin{table}
\setlength{\tabcolsep}{2mm}
\centering
\begin{tabular}{c || c | c | c}
Peg & $1$ & $2$ & $3$ 
\\
\hline
\hline
$Q_1$ & $1$ & $5$ & $6$ 
\\
\hline
$Q_2$ & $4$ & $1$ & $6$ 
\\
\hline
$Q_3$ & $4$ & $5$ & $1$ 
\\
\hline
\hline
$Q_4$ & $2$ & $6$ & $4$ 
\\
\hline
$Q_5$ & $5$ & $2$ & $4$ 
\\
\hline
$Q_6$ & $5$ & $6$ & $2$ 
\\
\hline
\hline
$Q_7$ & $3$ & $4$ & $5$ 
\\
\hline
$Q_8$ & $6$ & $3$ & $5$ 
\\
\hline
$Q_9$ & $6$ & $4$ & $3$ 
\end{tabular}
\caption{Feasible (but not optimal)
strategy for $ p=3$,  $c = 6 $, $k=9$.}
\label{tab36}
\end{table}

Concretely, let 
$ c = 6s + t $, where $ s \in \NN_0 $, 
$ t \in \{4,5,6,7,8,9\}$ are uniquely determined.
For $ t=4,5,\dots,9 $ we start with the questions of the 
strategies in Tables~\ref{tab3a}--\ref{tab3f}, respectively.
Again we call this first group of 
questions the \emph{base questions}.

In all six cases the base questions
are followed $s$ 
times by the $9$ questions of Table~\ref{tab36}. 
For $l=1,2,\dots,s$, the $l$-th of these $s$ blocks is obtained by
adding the number $t+6(l-1)$
to the colors in Table~\ref{tab36}.
Again we call this second group of 
questions the \emph{iterated questions}.

As examples, we present the corresponding
$ (\lfloor (3c-1)/2 \rfloor)  $-strategies for 
$10 \le c \le 15$ explicitly in Table~\ref{tab31015}.

\begin{table}
\def\arraystretch{.97}
\setlength{\tabcolsep}{2mm}
\centering
\begin{subtable}[b]{9.5em}
\begin{tabular}{r || r | r | r}
Peg & $1$ & $2$ & $3$ 
\\
\hline
\hline
$Q_1$ & $1$ & $2$ & $3$ 
\\
\hline
$Q_2$ & $1$ & $3$ & $4$ 
\\
\hline
$Q_3$ & $3$ & $2$ & $4$ 
\\
\hline
$Q_4$ & $2$ & $4$ & $1$ 
\\
\hline
\hline
\hline
$Q_5$ & $5$ & $9$ & $10$ 
\\
\hline
$Q_6$ & $8$ & $5$ & $10$ 
\\
\hline
$Q_7$ & $8$ & $9$ & $5$ 
\\
\hline
\hline
$Q_8$ & $6$ & $10$ & $8$ 
\\
\hline
$Q_9$ & $9$ & $6$ & $8$ 
\\
\hline
$Q_{10}$ & $9$ & $10$ & $6$ 
\\
\hline
\hline
$Q_{11}$ & $7$ & $8$ & $9$ 
\\
\hline
$Q_{12}$ & $10$ & $7$ & $9$ 
\\
\hline
$Q_{13}$ & $10$ & $8$ & $7$ 
\end{tabular}
\subcaption{$c=10$, $k=13$.}
\label{tabb3a}
\end{subtable}
\hspace*{0.5em}
\begin{subtable}[b]{9.5em}
\begin{tabular}{r || r | r | r}
Peg & $1$ & $2$ & $3$ 
\\
\hline
\hline
$Q_1$ & $1$ & $3$ & $4$ 
\\
\hline
$Q_2$ & $2$ & $3$ & $4$ 
\\
\hline
$Q_3$ & $3$ & $1$ & $5$ 
\\
\hline
$Q_4$ & $4$ & $2$ & $5$ 
\\
\hline
$Q_5$ & $3$ & $5$ & $1$ 
\\
\hline
$Q_6$ & $4$ & $5$ & $3$ 
\\
\hline
\hline
\hline
$Q_7$ & $6$ & $10$ & $11$ 
\\
\hline
$Q_8$ & $9$ & $6$ & $11$ 
\\
\hline
$Q_9$ & $9$ & $10$ & $6$ 
\\
\hline
\hline
$Q_{10}$ & $7$ & $11$ & $9$ 
\\
\hline
$Q_{11}$ & $10$ & $7$ & $9$ 
\\
\hline
$Q_{12}$ & $10$ & $11$ & $7$ 
\\
\hline
\hline
$Q_{13}$ & $8$ & $9$ & $10$ 
\\
\hline
$Q_{14}$ & $11$ & $8$ & $10$ 
\\
\hline
$Q_{15}$ & $11$ & $9$ & $8$ 
\end{tabular}
\subcaption{$c=11$, $k=15$.}
\label{tabb3b}
\end{subtable}
\hspace*{0.5em}
\begin{subtable}[b]{9.5em}
\begin{tabular}{r || r | r | r}
Peg & $1$ & $2$ & $3$ 
\\
\hline
\hline
$Q_1$ & $1$ & $2$ & $3$ 
\\
\hline
$Q_2$ & $1$ & $3$ & $2$ 
\\
\hline
$Q_3$ & $2$ & $1$ & $3$ 
\\
\hline
$Q_4$ & $2$ & $4$ & $1$ 
\\
\hline
$Q_5$ & $3$ & $5$ & $2$ 
\\
\hline
$Q_6$ & $5$ & $4$ & $6$ 
\\
\hline
$Q_7$ & $6$ & $5$ & $4$ 
\\
\hline
\hline
\hline
$Q_8$ & $7$ & $11$ & $12$ 
\\
\hline
$Q_9$ & $10$ & $7$ & $12$ 
\\
\hline
$Q_{10}$ & $10$ & $11$ & $7$ 
\\
\hline
\hline
$Q_{11}$ & $8$ & $12$ & $10$ 
\\
\hline
$Q_{12}$ & $11$ & $8$ & $10$ 
\\
\hline
$Q_{13}$ & $11$ & $12$ & $8$ 
\\
\hline
\hline
$Q_{14}$ & $9$ & $10$ & $11$ 
\\
\hline
$Q_{15}$ & $12$ & $9$ & $11$ 
\\
\hline
$Q_{16}$ & $12$ & $10$ & $9$ 
\end{tabular}
\subcaption{$c=12$, $k=16$.}
\label{tabb3c}
\end{subtable}

\vspace*{1.1em}
\begin{subtable}[b]{9.5em}
\begin{tabular}{r || r | r | r}
Peg & $1$ & $2$ & $3$ 
\\
\hline
\hline
$Q_1$ & $1$ & $2$ & $7$ 
\\
\hline
$Q_2$ & $4$ & $1$ & $7$ 
\\
\hline
$Q_3$ & $2$ & $7$ & $5$ 
\\
\hline
$Q_4$ & $5$ & $7$ & $4$ 
\\
\hline
$Q_5$ & $7$ & $3$ & $2$ 
\\
\hline
$Q_6$ & $7$ & $4$ & $3$ 
\\
\hline
$Q_7$ & $6$ & $5$ & $1$ 
\\
\hline
$Q_8$ & $3$ & $6$ & $1$ 
\\
\hline
$Q_9$ & $3$ & $5$ & $6$ 
\\
\hline
\hline
\hline
$Q_{10}$ & $8$ & $12$ & $13$ 
\\
\hline
$Q_{11}$ & $11$ & $8$ & $13$ 
\\
\hline
$Q_{12}$ & $11$ & $12$ & $8$ 
\\
\hline
\hline
$Q_{13}$ & $9$ & $13$ & $11$ 
\\
\hline
$Q_{14}$ & $12$ & $9$ & $11$ 
\\
\hline
$Q_{15}$ & $12$ & $13$ & $9$ 
\\
\hline
\hline
$Q_{16}$ & $10$ & $11$ & $12$ 
\\
\hline
$Q_{17}$ & $13$ & $10$ & $12$ 
\\
\hline
$Q_{18}$ & $13$ & $11$ & $10$ 
\end{tabular}
\subcaption{$c=13$, $k=18$.}
\label{tabb3d}
\end{subtable}
\hspace*{0.5em}
\begin{subtable}[b]{9.5em}
\begin{tabular}{r || r | r | r}
Peg & $1$ & $2$ & $3$ 
\\
\hline
\hline
$Q_1$ & $6$ & $5$ & $4$ 
\\
\hline
$Q_2$ & $3$ & $1$ & $5$ 
\\
\hline
$Q_3$ & $7$ & $6$ & $4$ 
\\
\hline
$Q_4$ & $8$ & $2$ & $6$ 
\\
\hline
$Q_5$ & $2$ & $4$ & $6$ 
\\
\hline
$Q_6$ & $2$ & $7$ & $5$ 
\\
\hline
$Q_7$ & $4$ & $1$ & $3$ 
\\
\hline
$Q_8$ & $8$ & $5$ & $2$ 
\\
\hline
$Q_9$ & $1$ & $6$ & $7$ 
\\
\hline
$Q_{10}$ & $4$ & $3$ & $8$ 
\\
\hline
\hline
\hline
$Q_{11}$ & $9$ & $13$ & $14$ 
\\
\hline
$Q_{12}$ & $12$ & $9$ & $14$ 
\\
\hline
$Q_{13}$ & $12$ & $13$ & $9$ 
\\
\hline
\hline
$Q_{14}$ & $10$ & $14$ & $12$ 
\\
\hline
$Q_{15}$ & $13$ & $10$ & $12$ 
\\
\hline
$Q_{16}$ & $13$ & $14$ & $10$ 
\\
\hline
\hline
$Q_{17}$ & $11$ & $12$ & $13$ 
\\
\hline
$Q_{18}$ & $14$ & $11$ & $13$ 
\\
\hline
$Q_{19}$ & $14$ & $12$ & $11$ 
\end{tabular}
\subcaption{$c=14$, $k=19$.}
\label{tabb3e}
\end{subtable}
\hspace*{0.5em}
\begin{subtable}[b]{9.5em}
\begin{tabular}{c || c | c | c}
Peg & $1$ & $2$ & $3$ 
\\
\hline
\hline
$Q_1$ & $3$ & $1$ & $4$ 
\\
\hline
$Q_2$ & $2$ & $1$ & $3$ 
\\
\hline
$Q_3$ & $4$ & $2$ & $3$ 
\\
\hline
$Q_4$ & $1$ & $2$ & $4$ 
\\
\hline
$Q_5$ & $5$ & $7$ & $8$ 
\\
\hline
$Q_6$ & $5$ & $6$ & $7$ 
\\
\hline
$Q_7$ & $6$ & $8$ & $7$ 
\\
\hline
$Q_8$ & $7$ & $5$ & $8$ 
\\
\hline
$Q_9$ & $7$ & $3$ & $1$ 
\\
\hline
$Q_{10}$ & $7$ & $3$ & $5$ 
\\
\hline
$Q_{11}$ & $8$ & $9$ & $2$ 
\\
\hline
$Q_{12}$ & $8$ & $4$ & $9$ 
\\
\hline
\hline
\hline
$Q_{13}$ & $10$ & $14$ & $15$ 
\\
\hline
$Q_{14}$ & $13$ & $10$ & $15$ 
\\
\hline
$Q_{15}$ & $13$ & $14$ & $10$ 
\\
\hline
\hline
$Q_{16}$ & $11$ & $15$ & $13$ 
\\
\hline
$Q_{17}$ & $14$ & $11$ & $13$ 
\\
\hline
$Q_{18}$ & $14$ & $15$ & $11$ 
\\
\hline
\hline
$Q_{19}$ & $12$ & $13$ & $14$ 
\\
\hline
$Q_{20}$ & $15$ & $12$ & $14$ 
\\
\hline
$Q_{21}$ & $15$ & $13$ & $12$ 
\end{tabular}
\subcaption{$c=15$, $k=21$.}
\label{tabb3f}
\end{subtable}
\caption{Feasible and optimal 
$(\lfloor (3c-1)/2 \rfloor)$-strategies 
for $ p=3$ and $ 10 \le c \le 15 $.}
\label{tab31015}
\end{table}

In Subsections~\ref{ssec-three-pegs-feas}
and~\ref{ssec-three-pegs-opt} we prove
the following theorem.

\begin{theorem}
\label{thp3}
The presented strategy 
is a \emph{feasible} and \emph{optimal} 
$ \lfloor (3c-1)/2 \rfloor $-strategy 
for $p=3$ and for the corresponding $c \ge 4 $.
\end{theorem}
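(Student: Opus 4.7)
The plan is to mirror the two-part scheme of Theorem~\ref{thp2}: verify feasibility in a subsection analogous to~\ref{ssec-two-pegs-feas}, then prove optimality by a counting argument as in~\ref{ssec-two-pegs-opt}.

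For feasibility, I would exploit that the base block and the iterated blocks use pairwise disjoint color ranges (the base on $t$ colors and each iterated block on a fresh set of $6$ colors). Since each question draws its three colors from a single block, a question coming from block $X$ can produce a black peg on peg $i$ only if the secret's color $s_i$ lies in $X$. Establishing the $p=3$ analog of Observation~\ref{obs1} for the iterated block of Table~\ref{tab36} -- its nine questions naturally split into three triples, one per ``main'' color $\{1,2,3\}$, where each triple consists of a $(1,2,2)$-, a $(2,1,2)$- and a $(2,2,1)$-question (within the block) placing the main color on pegs $1$, $2$, $3$ in turn -- a case analysis on the answers to these three triples, together with the ``at most one color missing per peg'' property of each block, lets me recover the three components of the secret. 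The six base cases ($t = 4, \dots, 9$) in Tables~\ref{tab3a}--\ref{tab3f} are feasible by construction (found by brute-force search) and only need to be verified by exhibiting their answer tables.

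For optimality, the plan is to prove the $p=3$ analog of Lemma~\ref{lem1}: (a) at most one color is missing on each peg (the proof of~\ref{lem1}\ref{item1a} transfers directly); (b) pairs of questions with too much ``mutual structure'' are forbidden (e.g., two disjoint questions, or two double-neighboring questions that are disjoint on the remaining peg, lead to indistinguishable secrets); and (c) the numbers of $(1,1,1)$-questions and of questions with exactly two ``unique'' pegs are each bounded by small constants. Setting $l_i$ to be the number of colors occurring exactly once on peg $i$, the missing-color lemma yields $l_i \ge 2(c-1) - k$ as in the $p=2$ case. Classifying questions by their uniqueness signature so that $m_j$ counts questions with exactly $j$ unique pegs gives
\[
  l_1+l_2+l_3 \;=\; m_1 + 2m_2 + 3m_3 \;\le\; k + m_2 + 2m_3,
\]
because $m_1 = k - m_0 - m_2 - m_3$ and $m_0\ge 0$. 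Combining these two inequalities with the bounds on $m_2$ and $m_3$ from (b)-(c) and performing case analysis on the parity of $c$ (since $\lfloor(3c-1)/2\rfloor$ changes behavior there) should produce $k \ge \lfloor(3c-1)/2\rfloor - 1$, matching the number of main questions used by the presented strategy.

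The main obstacle is obtaining the right constants in the structural lemmas (b)-(c). For $p=2$, bounding the number of $(1,1)$-questions by three was already the crux of the optimality proof; for $p=3$ one must control a richer class of questions (those with two or three unique pegs), because each such question can now ``save'' up to two from the $l_1+l_2+l_3$ sum rather than one. Getting the constants tight so that the lower bound matches $\lfloor(3c-1)/2\rfloor - 1$ exactly, rather than being off by an additive constant, is where I expect the bulk of the casework to lie -- likely a finer split on $c \bmod 6$ in parallel with the base-block structure will be needed to push the argument through.
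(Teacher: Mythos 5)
Your overall architecture matches the paper's: feasibility via the disjoint-color block structure of Table~\ref{tab36} plus the precomputed base strategies, and optimality via a counting inequality of the form ``number of questions $\ge l_1+l_2+l_3-2m_3-m_2$'' (the paper's Observation~\ref{lrel}, with $e=m_3$ and $f=m_2$) combined with structural bounds on $m_2$ and $m_3$. However, there are two genuine gaps. First, in the feasibility part, it does \emph{not} suffice that the six base strategies of Table~\ref{tab3} are feasible and that the iterated blocks determine any peg whose secret color lies in their color range. If the answers to the iterated questions reveal \emph{exactly one} peg of the secret, the remaining two pegs must be recoverable from the base questions alone, restricted to the two unknown columns; i.e., every base strategy must remain feasible after deleting an arbitrary column. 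This is a strictly stronger property than feasibility of the base strategy, and it can fail: the paper exhibits (Table~\ref{ex3}) a feasible $4$-question strategy for $c=4$ whose extension by one iterated block is infeasible for $c=10$, precisely because its first-two-column sub-strategy cannot distinguish $(3\e1)$ from $(4\e2)$. Your argument as written would accept that base block and hence prove a false statement; the paper closes this hole by separately verifying (by computer) the feasibility of all column-deleted sub-strategies of Tables~\ref{tab3a}--\ref{tab3f}.

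Second, in the optimality part, the entire difficulty is concentrated in the structural constants you defer to ``(b)--(c)'', and your sketch does not supply them. The paper needs, depending on how many pegs have a missing color and on the parity of $c$, the bounds $2m_3+m_2\le 3,5,6,7$; these follow from Lemma~\ref{lem3} (at most two $(1,1,1)$-questions, at most one if every peg misses a color), Lemma~\ref{lem4} (at most three questions with exactly two unique pegs if a $(1,1,1)$-question exists \emph{or} if every peg misses a color), and Lemma~\ref{lem5} (at most six, respectively five, such questions otherwise). Proving these requires lengthy case analyses producing pairs of indistinguishable secrets for each forbidden configuration, and the bounds interact nontrivially with the missing-color counts -- note in particular the trick in Lemmas~\ref{lem4}\ref{item4b} and~\ref{lem5}\ref{item5b} of adjoining the ``missing-color question'' to the strategy to reduce to the case where a $(1,1,1)$-question exists. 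Without these lemmas the counting argument cannot close, and an additive loss of even $1$ or $2$ in $2m_3+m_2$ destroys the matching lower bound. Two further points: all of these lemmas require $c\ge5$ (Lemma~\ref{lem2}\ref{item2f} genuinely fails for $c=4$, as the question $Q_4$ of Table~\ref{ex3a} shows), so $c=4$ must be handled by exhaustive search; and the case split the paper actually needs is on the parity of $c$ together with the number of pegs carrying a missing color, not on $c\bmod 6$.
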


\begin{remark}
\begin{enumerate}[label=(\alph*)]
\item For even $c$,
our presented strategy 
for the Static Black-Peg AB~Game 
needs \emph{two questions less} than the strategy
from~\cite{JD20} for Static Black-Peg Mastermind
(see Strategies 1 and 3 in~\cite{JD20}).
\item For odd $c$,
our presented strategy for the Static Black-Peg 
AB~Game needs \emph{one question less} 
than the strategy from~\cite{JD20} 
for Static Black-Peg Mastermind
(see Strategies~2 and~4 in~\cite{JD20}).
\end{enumerate}
\end{remark}

\subsection{Feasibility of the
$ \lfloor (3c-1)/2 \rfloor $-Strategy}
\label{ssec-three-pegs-feas}

We start with the following observations.

\begin{observation}
\label {obs2}
For the $10$-strategy for $c=6$ of Table~\ref{tab36}
the following statements hold:
\begin{enumerate}[label=(\alph*)]
\item 
It consists entirely of $ (1,2,2) $-questions,
$ (2,1,2) $-questions and $ (2,2,1)$-questions.
\item It contains neighboring pairs of questions,
but no double neighboring pairs.
\item\label{obs2c} No color is missing on any of the pegs.
\item It consists of three blocks of three questions each,
all having the same structure,
namely one $ (1,2,2) $-question,
one $ (2,1,2) $-question 
and one $ (2,2,1)$-question.
Each two of them are neighboring,
but not double neighboring, and they are not
neighboring to any other question.
\end{enumerate}
\end{observation}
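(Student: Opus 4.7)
The plan is a direct verification, as Observation~\ref{obs2} concerns only the concrete nine-question strategy of Table~\ref{tab36}. I would proceed part by part, organizing the bookkeeping so as to exploit an obvious block-symmetry of the strategy.

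For parts~(a) and~(c) simultaneously, I would first tabulate, for each peg $i\in\{1,2,3\}$ and each color $j\in\{1,\dots,6\}$, the number of times $j$ occurs on peg~$i$ across the nine questions. Reading off this $3\times 6$ table shows that colors $1,2,3$ each occur exactly once on each peg, while colors $4,5,6$ each occur exactly twice on each peg. Part~(c) is then immediate since all six colors appear on every peg. Part~(a) follows by inspecting each question against this tally: for example, $Q_1=(1\e 5\e 6)$ has its peg-$1$ entry in the ``singleton'' class and its peg-$2$ and peg-$3$ entries in the ``doubleton'' class, so it is a $(1,2,2)$-question in the sense of Definition~\ref{def1}\ref{def1d}; the analogous inspection of the remaining eight questions yields the complete classification into the three announced types.

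For parts~(b) and~(d), I would adopt the block partition $B_1=\{Q_1,Q_2,Q_3\}$, $B_2=\{Q_4,Q_5,Q_6\}$, $B_3=\{Q_7,Q_8,Q_9\}$ suggested by the horizontal double lines in Table~\ref{tab36}. Within $B_1$, a direct check shows that each of the three pairs of questions overlaps in exactly one peg (the pairs $(Q_1,Q_2)$, $(Q_1,Q_3)$, $(Q_2,Q_3)$ overlap in pegs $3,2,1$ respectively), so within $B_1$ the pairs are neighboring but not double-neighboring. Between $B_1$ and $B_2\cup B_3$, a peg-by-peg comparison confirms that no entry of any question in $B_1$ coincides with the entry at the same peg of any question in another block. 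Finally, I would observe that the simultaneous cyclic permutation $1\to 2\to 3\to 1$ on $\{1,2,3\}$ and $4\to 5\to 6\to 4$ on $\{4,5,6\}$ sends $B_1$ to $B_2$ and $B_2$ to $B_3$; since the relations ``neighboring'' and ``double-neighboring'' are invariant under arbitrary permutations of the color set, the analogous intra-block statements for $B_2$ and $B_3$, and the inter-block statements between $B_2,B_3$ and between $B_1,B_3$, follow at once from those already checked. This completes (b) and (d).

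The main obstacle is the purely combinatorial one of keeping the case analysis compact: the $\binom{9}{2}=36$ pairwise comparisons required by~(b) and~(d) collapse under the block-symmetry above to only $3$ intra-block pairs and $9$ inter-block pairs, which I expect to present inside a single small auxiliary table so that the proof stays short and transparent.
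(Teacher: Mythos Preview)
Your proposal is correct and follows the same approach the paper implicitly takes: Observation~\ref{obs2} is stated in the paper without proof, since it is meant to be a direct inspection of the nine concrete questions in Table~\ref{tab36}. Your write-up simply makes that inspection explicit, and the cyclic color-permutation symmetry you identify (which the paper does not mention) is a valid and pleasant way to compress the $36$ pairwise checks for parts~(b) and~(d).
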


\begin{observation}
\label{obs3}
On each peg of the six $ \lfloor (3c-1)/2 \rfloor $-strategies 
of Table~\ref{tab3}, at most one color is missing
\end{observation}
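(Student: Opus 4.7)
The plan is straightforward: Observation~\ref{obs3} is a purely finitary claim about the six explicit tables in Table~\ref{tab3}, so the argument is simply a direct verification, peg by peg and table by table. Unlike the substantive lemmas in the paper, there is no clever reduction to make -- one just reads off the columns.

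Concretely, I would list for each of the six strategies in Tables~\ref{tab3a}--\ref{tab3f} and for each peg $i\in\{1,2,3\}$ the set of colors appearing in column $i$, and confirm that in every case this set has cardinality at least $c-1$. For the strategies with $c\in\{4,5,6,8,9\}$ exactly one color is missing on every peg (for instance, in Table~\ref{tab3a} the missing colors are $4$, $1$, $2$ on pegs $1,2,3$ respectively; in Table~\ref{tab3e} they are $5$, $8$, $1$; and so on), while for $c=7$ (Table~\ref{tab3d}) one checks that each peg in fact uses \emph{all} seven colors, so nothing is missing at all. This directly yields the claim.

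The only possible obstacle is simply being careful not to miscount when scanning the larger tables (especially $c=8$ and $c=9$, each having $10$ or $12$ rows). To make the verification robust I would sort the entries of each column before comparing with $\{1,2,\dots,c\}$, which reduces the check to spotting the at most one gap. Since no structural argument is required, this observation can be stated and discharged in a few lines, and it is then available as the input hypothesis for the feasibility argument of Subsection~\ref{ssec-three-pegs-feas}, where it is used (together with Observation~\ref{obs2}\ref{obs2c}) to guarantee that once the color on one peg has been determined from a non-empty answer, the remaining pegs are pinned down as well.
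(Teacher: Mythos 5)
Your approach is exactly right and coincides with the paper's, which states Observation~\ref{obs3} without proof precisely because it is a finite check of the six tables; direct column-by-column verification confirms the claim in all cases. One small slip in your illustrative examples: for $c=9$ (Table~\ref{tab3f}) it is not true that exactly one color is missing on every peg --- peg~$2$ carries all nine colors (the entries are $1,1,2,2,7,6,8,5,3,3,9,4$), so only pegs~$1$ and~$3$ have a missing color ($9$ and $6$, respectively); this does not affect the ``at most one'' conclusion.
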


Now we start with the proof of feasibility.
Consider one fixed $ (1,2,2) $-question,
$ (2,1,2)$-question or $ (2,2,1) $-question
which lies in one of the copies of 
the $9$-strategy of Table~\ref{tab36}. We begin
our reasoning by discussing which conclusions can
be drawn if the question receives a non-empty answer
$1$B, $2$B or~$3$B. 

\begin{enumerate}[label=(\Roman*)]

\item The answer is $1$B.

Here it is not clear which peg is correct. 
Again, this can be decided by the answers
to the two neighboring questions.
If \emph{both} answers to the neighboring questions 
are empty, then the color of the
non-overlapping peg is the correct one.
Otherwise the color of the 
peg which overlaps with the neighboring question 
whose answer contains the larger number of blacks 
is the correct one.

As an example, consider the strategy
for $c=12$ in Table~\ref{tabb3c}. 
For the secret $ (7 \e 9 \e 2) $, the answer
to the question $ Q_8 = (7 \e 11 \e 12) $ is $1$B.
The two neighboring questions to $Q_8$ are 
$ Q_9 = (10 \e 7 \e 12) $ and 
$ Q_{10} = (10 \e 11 \e 7) $.
As the answers to 
$ Q_9 $ and $ Q_{10} $ are both empty,
we know that color $7$ is correct on the first peg. 

On the other hand, for the secret 
$ (5 \e 11 \e 9) $, 
the answer to both the questions $ Q_8 $ and $Q_{10} $ 
is~$1$B and the answer to $ Q_9 $ is empty. 
Consequently, color $11$ must be correct on the second
peg.

\item The answer is $2$B.

Here it is not clear which two pegs are correct. 
However, this can be decided by the answers
to the two neighboring questions (which lie in the same
block) because \emph{at least} one of the answers to these
neighboring questions is also non-empty.
If \emph{both} answers 
are non-empty, then the colors of both overlapping 
pegs are correct. Otherwise the color of the 
peg which overlaps with the neighboring question 
that received the empty answer is the \emph{incorrect} one.

As an example, consider again the strategy
for $c=12$ in Table~\ref{tabb3c}. 
For the secret $ (7 \e 11 \e 3) $, the answer
to the question $Q_8$ is~$2$B.
As $Q_9$ gives the empty answer and 
$Q_{10}$ gives the answer~$1$B, we know that color~$12$ is
\emph{not} correct on the third peg, so that
color~$7$ is correct on the first peg and
color~$11$ is correct on the second peg. 

On the other hand, for the secret 
$ (8 \e 11 \e 12) $, 
the answer to the question $ Q_8 $ is $2$B,
and the answers to $ Q_9 $, $Q_{10} $ are $1$B. 
Then we know that color $11$ is correct on 
the second peg  
and that color $12$ is correct on the third peg.

\item The answer is $3$B.

The secret is found.
\end{enumerate}

So as soon we have a non-empty answer $i$B to a 
$ (1,2,2)$-question, a $ (2,1,2)$-question
or a $ (2,2,1) $-question
from one of the copies of 
the $9$-strategy of Table~\ref{tab36}, we can determine 
the $i$~pegs which gave rise to the non-empty answers,
and thus we know their colors. 

So assume that we start with the iterated questions
\emph{before} we ask the base questions. 
Then we have four cases after having seen the resulting
answers:
\begin{enumerate}[label=(\Roman*)]
\item No colors of any of the three pegs have been determined.

By Observation~\ref{obs2}\ref{obs2c}, this means that the correct
colors are those occurring in the base questions. Thus,
they are uniquely determined by the answers to those questions,
thanks to 
the feasibility of the strategies for $c=4,5,6,7,8,9$
in Table~\ref{tab3}. Their feasibility 
has been checked by a computer program using brute-force search~\cite{SC22}·

\item One peg has been determined.

Then the colors of two pegs are unknown. To prove that they
can be determined from the answers to the base questions,
it has to be shown that the 
$\lfloor (3c-1)/2 \rfloor $-strategies 
for $c=4,5,6,7,8,9$ in Table~\ref{tab3} remain feasible
if we remove one arbitrary 
column (corresponding to the already found peg).
The feasibility of these three sub-strategies
was again checked by the computer program~\cite{SC22}·

For motivation, in Table~\ref{ex3} 
we present an example of a feasible
$\lfloor (3c-1)/2 \rfloor $-strategy for $c=4$,
where adding one copy of the iterated questions
leads to an infeasible 
$\lfloor (3c-1)/2 \rfloor $-strategy for $c=10$.
For the latter, the two possible secrets
$ (1 \e 4 \e 5) $ and $ (2 \e 3 \e 5) $ lead to the same
combination of answers, namely
$1$B, $1$B, $1$B, $0$B, $0$B, $0$B, $1$B, $0$B, $\dots$, $0$B
which shows that the strategy is not feasible.
The reason is that the sub-strategy consisting of the
first two columns for $c=4$ is not feasible,
as the secrets $ (3 \e 1 ) $ and $ ( 4 \e 2 ) $
are indistinguishable.

\pagebreak[3]

\item Two pegs have been determined.

Only the color of one peg is still unknown. 
This peg is determined by the answers to the base questions 
as by Observation~\ref{obs3}) on each peg only one color is missing.

\item Three pegs have been determined.

The secret is found without making use of the base questions.

\end{enumerate}

So we have shown the feasibility 
of the $ \lfloor (3c-1)/2 \rfloor $-strategy 
in all cases.

\begin{table}
\setlength{\tabcolsep}{2mm}
\centering
\begin{subtable}[b]{9.5em}
\begin{tabular}{c || c | c | c}
Peg & $1$ & $2$ & $3$ 
\\
\hline
\hline
$Q_1$ & $1$ & $3$ & $2$ 
\\
\hline
$Q_2$ & $1$ & $3$ & $4$ 
\\
\hline
$Q_3$ & $2$ & $4$ & $3$ 
\\
\hline
$Q_4$ & $4$ & $1$ & $3$ 
\\
\end{tabular}
\subcaption{$c=4$, $k=4$.}
\label{ex3a}
\end{subtable}
\hspace*{0.5em}
\begin{subtable}[b]{9.5em}
\begin{tabular}{c || r | r | r}
Peg & $1$ & $2$ & $3$ 
\\
\hline
\hline
$Q_1$ & $1$ & $3$ & $2$ 
\\
\hline
$Q_2$ & $1$ & $3$ & $4$ 
\\
\hline
$Q_3$ & $2$ & $4$ & $3$ 
\\
\hline
$Q_4$ & $4$ & $1$ & $3$ 
\\
\hline
\hline
\hline
$Q_5$ & $5$ & $9$ & $10$ 
\\
\hline
$Q_6$ & $8$ & $5$ & $10$ 
\\
\hline
$Q_7$ & $8$ & $9$ & $5$ 
\\
\hline
\hline
$Q_8$ & $6$ & $10$ & $8$ 
\\
\hline
$Q_9$ & $9$ & $6$ & $8$ 
\\
\hline
$Q_{10}$ & $9$ & $10$ & $6$ 
\\
\hline
\hline
$Q_{11}$ & $7$ & $8$ & $9$ 
\\
\hline
$Q_{12}$ & $10$ & $7$ & $9$ 
\\
\hline
$Q_{13}$ & $10$ & $8$ & $7$ 
\end{tabular}
\subcaption{$c=10$, $k=13$.}
\label{ex3b}
\end{subtable}
\caption{Example strategy 
for $ p=3$ and $ c=4 $ and $c=10$, respectively.}
\label{ex3}
\end{table}

\subsection{Optimality of the
$ \lfloor (3c-1)/2 \rfloor $-Strategy}
\label{ssec-three-pegs-opt}

We use the following lemma, which in 
parts~\ref{item2a}-\ref{item2e}
is a generalization of Lemma~\ref{lem1}, 
and which holds only for $ c \ge 5 $.

\begin{lemma}
\label{lem2}
For Static Black-Peg AB~Game with $p=3$ and $ c \ge 5 $ the following statements hold:

\begin{enumerate}[label=(\alph*)]
\item
\label{item2a}
For each feasible strategy and for each peg there exists at most
one color which does not occur on this peg. 
\item
\label{item2b}
A feasible strategy cannot contain two
$ (1,1,\star) $-questions which are disjoint in the first two 
pegs.~\footnote{Recall that $(q_1\e q_2\e q_3)$ 
and $(q_1'\e q_2'\e q_3')$ are said to be disjoint in the first two pegs 
if $\{q_1,q_2\}\cap\{q_1',q_2'\}=\emptyset$.}

\item 
\label{item2c}
If a feasible strategy contains 
three $(1,1,\star)$-questions, 
by possibly permuting the colors, 
these questions can be written in the form 
$ (1 \e 2 \e \star_1) $, $ (2 \e 3 \e \star_2) $, 
$ (3 \e 1 \e \star_3) $,
for suitable colors $ \star_1 $, $\star_2$, $\star_3$.

\item
\label{item2d}
If a feasible strategy contains 
three $(1,1,\star)$-questions, 
then on at least one of the first two pegs of the  
questions, all $c$ colors must occur.

\item
\label{item2e}
A feasible strategy cannot contain 
four $(1,1,\star)$-questions. 

\item
\label{item2f}
Let a feasible strategy have a $ (1,1,\star) $-question
$ (q_1 \e q_2 \e \star_1) $, for a suitable color $ \star_1$, 
and let $q_3$ not occur on the first peg
and $q_4$ not occur on the second peg.
Then $ q_3\neq q_4 $ holds 
and, furthermore, $q_1=q_4$ or $q_2=q_3$.

\item
\label{item2g}
Let a feasible strategy have two 
$ (1,1,\star) $-questions,
and let $q_1$ not occur on the first peg
and $q_2$ not occur on the second peg.
Then, by permuting colors and reordering questions,
we can assume that $ q_1=3$ and $q_2 = 1 $, and the
two $ (1,1,\star) $-questions
can be written in the form
$ (1 \e 2 \e \star_1) $, $ (2 \e 3 \e \star_2) $,
for suitable colors $\star_1$, $\star_2$. 
\end{enumerate}

By symmetry, statements analogous to~\ref{item2b}-\ref{item2g} 
hold for the first and the third peg, and
for the second and the third peg.
\end{lemma}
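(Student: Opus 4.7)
My plan is to mirror the indistinguishability template of Lemma~\ref{lem1}: for each part I exhibit two valid possible secrets that yield identical answer vectors on every question of the strategy, contradicting feasibility. The workhorse observation is that a $(1,1,\star)$-question pins its peg-$1$ and peg-$2$ colors to occur exactly once on those pegs across the whole strategy, so every other question automatically avoids those colors on those pegs; this is precisely what keeps the ``other answers'' matched in the constructions below. The hypothesis $c\ge 5$ is used to guarantee a spare color for each construction.

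Part~\ref{item2a} copies Lemma~\ref{lem1}\ref{item1a}: if colors $a\ne b$ are both missing from a peg, say peg~$1$, I pick distinct $x,y\notin\{a,b\}$ and observe that $(a\e x\e y)$ and $(b\e x\e y)$ give identical answers to every question. For part~\ref{item2b}, after relabelling the two disjoint $(1,1,\star)$-questions as $(1\e 2\e \alpha)$ and $(3\e 4\e \beta)$ and picking $\gamma\notin\{1,2,3,4\}$, the secrets $(1\e 4\e \gamma)$ and $(3\e 2\e \gamma)$ give identical totals to the two listed questions, and the $(1,1,\star)$ property forces every other question $(p_1\e p_2\e p_3)$ to contribute $[p_3=\gamma]$ blacks to both secrets.

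Parts~\ref{item2c}--\ref{item2e} all concern multiple $(1,1,\star)$-questions and use~\ref{item2b} as their main tool. In~\ref{item2c} I relabel the three peg-$1$ entries as $1,2,3$ and argue that if some peg-$2$ entry $q_i'$ were outside $\{1,2,3\}$, then~\ref{item2b} would force the other two peg-$2$ entries to both equal $1$, violating their distinctness; the AB constraint $q_i'\ne i$ then leaves only two cyclic arrangements, one of which reduces to the claimed form by swapping colors $2,3$ and the second and third questions. For~\ref{item2d} I combine the canonical cyclic form from~\ref{item2c} with the secret pair $(1\e b\e \gamma),(a\e 2\e \gamma)$, picking $\gamma\notin\{1,2,a,b\}$ (such $\gamma$ exists because $a,b\notin\{1,2,3\}$ and $c\ge 5$); a direct answer-by-answer check using the missing-color hypothesis and the $(1,1,\star)$ property shows all answers coincide. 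Part~\ref{item2e} then follows because any fourth $(1,1,\star)$-question must avoid $\{1,2,3\}$ on both pegs~$1$ and~$2$, making it disjoint in those pegs from each of the three canonical questions, contradicting~\ref{item2b}.

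Parts~\ref{item2f} and~\ref{item2g} are the new ingredients. For~\ref{item2f}, with the question $(q_1\e q_2\e \star_1)$ and $q_3,q_4$ the colors missing from pegs~$1,2$: if $q_3=q_4=a$, the secrets $(a\e q_2\e \gamma),(q_1\e a\e \gamma)$ with $\gamma\notin\{a,q_1,q_2\}$ are valid and indistinguishable by the same mechanism as in~\ref{item2b}; if additionally $q_1\ne q_4$ and $q_2\ne q_3$, then $q_1,q_2,q_3,q_4$ are pairwise distinct and the secrets $(q_1\e q_4\e \gamma),(q_3\e q_2\e \gamma)$ reproduce the argument of~\ref{item2b} after relabelling. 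For~\ref{item2g}, \ref{item2b} forces the two $(1,1,\star)$-questions to overlap in pegs~$1$-$2$, and after renaming colors and possibly swapping the two questions they take the canonical form $(1\e 2\e \star_1),(2\e 3\e \star_2)$; applying~\ref{item2f} first to $(1\e 2\e \star_1)$ pins $q_2=1$ (since the alternative $q_1=2$ is ruled out: $q_1$ is missing from peg~$1$, whereas color~$2$ appears on peg~$1$ in the second question), and applying~\ref{item2f} to $(2\e 3\e \star_2)$ then pins $q_1=3$. The main obstacle is the combinatorial bookkeeping in parts~\ref{item2c} and~\ref{item2g}, where canonical forms must be extracted from non-disjointness constraints, together with the recurring need to ensure a spare color exists, which is exactly why $c\ge 5$ is assumed.
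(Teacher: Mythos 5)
Your proposal follows the paper's own route essentially step for step: the same indistinguishable-secret constructions for \ref{item2a}, \ref{item2b} and \ref{item2f}, the same reduction of \ref{item2c}--\ref{item2e} to the two-peg arguments of Lemma~\ref{lem1} (your explicit choice of a common third coordinate $\gamma$ is in fact more careful than the paper's ``add arbitrary entries on the third peg''), and the same two applications of \ref{item2f} to pin down $q_1=3$, $q_2=1$ in \ref{item2g}.

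There is, however, one genuine gap, in \ref{item2g}. You assert that \ref{item2b} plus renaming and possibly swapping the two questions yields the canonical form $(1\e 2\e \star_1)$, $(2\e 3\e \star_2)$. That is not true as stated: non-disjointness of two $(1,1,\star)$-questions $(u_1\e u_2\e\cdot)$, $(v_1\e v_2\e\cdot)$ (together with $u_1\neq v_1$, $u_2\neq v_2$) leaves open the double-overlap configuration $u_1=v_2$ \emph{and} $u_2=v_1$, i.e.\ $(1\e 2\e\cdot)$, $(2\e 1\e\cdot)$, which cannot be relabelled into your canonical form with three distinct colors. The paper excludes exactly this case by a separate indistinguishability argument: here both colors $1$ and $2$ occur on both of the first two pegs, so the missing colors satisfy $q_1,q_2\notin\{1,2\}$, and the secrets $(1\e q_2\e q_4)$ and $(q_1\e 2\e q_4)$ (for a spare color $q_4$) receive identical answers. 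Your write-up skips this entirely. The omission is easily repaired -- either by the paper's argument or by applying your own \ref{item2f} to $(1\e 2\e\star_1)$ in that configuration, where both disjuncts $1=q_2$ and $2=q_1$ fail outright -- but as written the derivation of the canonical form in \ref{item2g} is incomplete.
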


\begin{proof}
\begin{enumerate}[font=\bfseries]
\item[\ref{item2a}]
Suppose that there are two colors $a$
and $b$ which do not occur on the first peg. 
Choose arbitrary
colors~$x$, $y$ with $x \neq y$ 
which are neither $a$ nor 
$b$, where $x$, $y$ exist 
because of $ c \ge 5$.  Then the possible secrets
$ (a \e x \e y) $ and $ (b \e x \e y) $ 
are indistinguishable,
contradicting the feasibility of the strategy.

\item[\ref{item2b}]
Suppose that the strategy contains two 
$(1,1,\star)$-questions.
By permuting colors and using the definition of a $ (1,1,\star)$-question, 
we can assume that these questions are $ (1 \e 2 \e \star_1) $ and 
$ (3 \e 4 \e \star_2) $, 
for suitable colors $ \star_1 $, $\star_2$.
Let $x$ be a color which is different from $1,2,3,4$,
where $x$ exists because of \mbox{$ c \ge 5$}.  
Then the possible secrets $ (1 \e 4 \e x ) $ 
and $ (3 \e 2 \e x) $ receive the same answers ($1$B or $2$B) 
for the first two questions and the same answer (0$B$ or 1$B$)
for all other questions, i.e., they are indistinguishable. 

\item[\ref{item2c}-\ref{item2e}]
The proofs work analogously to the proofs of
Lemma~\ref{lem1}\ref{item1c}-\ref{item1e}, 
respectively, if we add arbitrary entries on the third peg.

\item[\ref{item2f}] 

Let $x$ be an arbitrary
color not in $\{q_1,q_2,q_3,q_4\}$. Again, $x$ exists 
because of $c\ge5$.

First, suppose 
that $ q_3 = q_4 $.  By assumption,
$q_1\neq q_3=q_4\neq q_2$ holds,
and thus $ (q_1 \e q_3  \e x) $ and $ (q_3 \e q_2 \e x) $
are different and well-defined secrets that
lead to the same combination of answers. 
Thus, $ q_3 \neq q_4 $ holds.

Second, suppose that $ q_1 \neq q_4 $
and $ q_2 \neq q_3 $.
Then the possible secrets $ (q_1 \e q_4 \e x) $
and $ (q_3 \e q_2 \e x) $
lead to the same combination of answers. 
Thus, $ q_1 = q_4 $ or $ q_2 = q_3 $.

Note that for $c=4$ this statement does not hold.
(See the strategy of Table~\ref{ex3a}, where
the question $ Q_4 $ is 
a $ (1,1,\star$)-question and where the four numbers 
$q_1=4$, $q_2=1$, $q_3=3$, $q_4=2$ are disjoint.
Nevertheless the strategy is feasible.)

\item[\ref{item2g}] Using~\ref{item2b}, permuting colors, and
possibly switching
the two $ (1,1,\star)$-questions, we
can assume that these questions are 
$  (1 \e 2 \e \star_1) $ and $ (2 \e a \e \star_2) $, 
for suitable colors $\star_1$, $\star_2$. The
color $a$ cannot be equal to~$1$ because then $q_1,q_2 \notin \{1,2\} $ would hold 
and that would make the
possible secrets $(1\e q_2\e q_4)$ and $(q_1\e 2\e q_4)$
indistinguishable
(where $q_4$ is an arbitrary color with 
$q_4\notin \{1,2,q_1,q_2\}$).
Hence, we can assume that $a=3$, as claimed. 
Applying~\ref{item2f} to the questions 
$  (1 \e 2 \e \star_1) $ and $ (2 \e 3 \e \star_2) $, 
it follows that $q_1=2 \vee q_2=1$ \emph{and}
$q_1=3 \vee q_2=2$. As not both $q_1$ and $q_2$
can be equal to $2$, it follows that $ q_1=3, q_2=1$.
\hfill $ \square $
\end{enumerate}
\end{proof}

\begin{lemma}
\label{lem3}
For Static Black-Peg AB~Game with $p=3$ 
and $ c \ge 5 $, the following statements hold:

\begin{enumerate}[label=(\alph*)]
\item
\label{item3a}
Let a feasible strategy contain two 
$ (1,1,1) $-questions. Then it 
contains no additional $ (\star,1,1) $-question, 
$ (1,\star,1) $-question or
$ (1,1,\star) $-question. 

\item
\label{item3b}
A feasible strategy 
contains at most two $ (1,1,1) $-questions.
\item
\label{item3c}
Let a feasible strategy contain one
$ (1,1,1) $-question and let there be one color for each 
peg which does not occur
on this peg. Then the strategy 
contains no further $ (\star,1,1) $-questions, 
$ (1,\star,1) $-questions or
$ (1,1,\star) $-questions. 

\item
\label{item3d}
A feasible strategy,
where for each peg there exists one color
which does not occur on this peg, 
contains at most one $ (1,1,1) $-question.
\end{enumerate}
\end{lemma}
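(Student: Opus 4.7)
The plan is to prove (a) and (c) directly, since (b) is then immediate (three $(1,1,1)$-questions contain two $(1,1,1)$-questions plus a third which, being $(1,1,1)$, is also $(1,1,\star)$, contradicting (a)) and so is (d) (a second $(1,1,1)$-question is in particular $(1,1,\star)$, contradicting (c)). In both (a) and (c) I will produce the contradiction by exhibiting the \emph{same} pair of distinct valid secrets $S_1 = (2\e 1\e 3)$ and $S_2 = (3\e 2\e 1)$: after establishing a canonical form for the involved questions, every remaining question will be forced into a color range so restricted that neither $S_1$ nor $S_2$ can match any of its pegs, while both will receive identical answers to the ``special'' questions of the setup.

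For (a), by permuting pegs I may assume the additional question $Q_3$ is $(1,1,\star)$. Since $(1,1,1)$ implies $(1,1,\star)$, Lemma~\ref{lem2}\ref{item2c} gives, after permuting colors, $Q_1=(1\e 2\e c_1)$, $Q_2=(2\e 3\e c_2)$, $Q_3=(3\e 1\e c_3)$. Applying the symmetric analogs of Lemma~\ref{lem2}\ref{item2b} to peg-pairs $\{1,3\}$ and $\{2,3\}$ (valid because $Q_1,Q_2$ are also $(1,\star,1)$- and $(\star,1,1)$-questions), together with the AB-game constraints $c_1\notin\{1,2\}$, $c_2\notin\{2,3\}$ and the fact that $c_1\neq c_2$ (each appears exactly once on peg~3), forces $c_1=3$ and $c_2=1$. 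The $(1,1,1)$-property of $Q_1,Q_2$ and the $(1,1,\star)$-property of $Q_3$ now imply that in every other question $(a\e b\e c)$, $a,b\notin\{1,2,3\}$ and $c\notin\{1,3\}$. A direct check shows $S_1$ and $S_2$ both give $1$B to each of $Q_1,Q_2,Q_3$ (matching a different peg each time, peg~3 never matching since $c_3\notin\{1,3\}$) and $0$B to every other question, contradicting feasibility.

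For (c), after permuting colors I take $Q_1=(1\e 2\e 3)$ and, up to peg symmetry, assume the additional question $Q_2$ is $(1,1,\star)$. Applying Lemma~\ref{lem2}\ref{item2f} to $Q_1$ for each of the three peg-pairs $\{1,2\}$, $\{1,3\}$, $\{2,3\}$ produces three disjunctions on the missing colors $(a_1,a_2,a_3)$; a short case analysis reduces to two possibilities, $(2,3,1)$ and $(3,1,2)$, related by the symmetry that swaps pegs $1,2$ and colors $1,2$. WLOG $(a_1,a_2,a_3)=(2,3,1)$. Writing $Q_2=(q_1'\e q_2'\e q_3')$: the $(1,1,1)$-property of $Q_1$ gives $q_1'\ne 1$, $q_2'\ne 2$; the missing colors give $q_1'\ne 2$, $q_2'\ne 3$; Lemma~\ref{lem2}\ref{item2b} applied to the pair $Q_1,Q_2$ forces $q_2'=1$; and Lemma~\ref{lem2}\ref{item2f} applied to $Q_2$ then forces $q_1'=3$. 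Hence $Q_2=(3\e 1\e q_3')$ for some $q_3'\notin\{1,3\}$. Exactly as in (a), every other question $(a\e b\e c)$ then satisfies $a,b\notin\{1,2,3\}$ and $c\notin\{1,3\}$, so the same pair $(S_1,S_2)$ is indistinguishable.

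The main obstacle is the canonical-form derivation: since the additional question is only assumed to satisfy the weaker $(1,1,\star)$-type property, not every peg-pair admits a direct application of Lemma~\ref{lem2}\ref{item2b}, and the constraints of Lemma~\ref{lem2}\ref{item2b}, Lemma~\ref{lem2}\ref{item2c}, Lemma~\ref{lem2}\ref{item2f}, and the AB-game rule must be combined with care to narrow the questions to the rigid form that the uniform witness pair $(2\e 1\e 3),(3\e 2\e 1)$ requires. Once the canonical form is in hand, the remainder is routine verification.
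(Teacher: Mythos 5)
Your proof is correct and follows essentially the same route as the paper: normalize the relevant questions to the canonical cyclic form $(1\e2\e3)$, $(2\e3\e1)$, $(3\e1\e\star)$ via Lemma~\ref{lem2}, then exhibit the indistinguishable pair $(2\e1\e3)$, $(3\e2\e1)$, with (b) and (d) as immediate corollaries of (a) and (c). The only cosmetic difference is in part (c), where you re-derive the canonical form by applying Lemma~\ref{lem2}\ref{item2f} to all three peg-pairs and then Lemma~\ref{lem2}\ref{item2b}/\ref{item2f} to the extra question, whereas the paper packages the same reasoning into its Lemma~\ref{lem2}\ref{item2g}.
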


\begin{proof}
\begin{enumerate}[font=\bfseries]
\item[\ref{item3a}]
Let $ Q_1$, $Q_2 $ be the  two $ (1,1,1) $-questions.

Suppose that $ Q_3 $ is
a $ (1,1,\star)$-question in the strategy.
By Lemma~\ref{lem2}\ref{item2c}, we can assume
that the three questions have the form
\begin{eqnarray}
\label{stat1}
\begin{array}{c || c | c | c}
\mbox{Peg} & 1 & 2 & 3
\\
\hline
\hline
Q_1 & 1 & 2 & \star_1
\\
\hline
Q_2 & 2 & 3 & \star_2
\\
\hline
Q_3 & 3 & 1 & \star_3
\end{array}
\hspace*{0.5em},
\end{eqnarray}
where $ \star_1$, $\star_2$, $\star_3$ are suitable colors
and $ \star_1 \neq \star_2 $ (since $Q_1$ and $Q_2$ are $(1,1,1)$-questions).

By applying Lemma~\ref{lem2}\ref{item2b} to $Q_1$ and $Q_2$ twice,
namely to the first and third peg, and to the second and third peg,
the form can be assumed to be
\begin{eqnarray}
\label{stat2}
\begin{array}{c || c | c | c}
\mbox{Peg} & 1 & 2 & 3
\\
\hline
\hline
Q_1 & 1 & 2 & 3
\\
\hline
Q_2 & 2 & 3 & 1
\\
\hline
Q_3 & 3 & 1 & \star_3
\end{array}
\hspace*{0.5em}.
\end{eqnarray}
Then the possible secrets 
$ (2 \e 1 \e 3) $ and $ (3 \e 2 \e 1) $ 
receive the same combination of answers, 
namely $1$B for $ Q_1$, $Q_2$, $Q_3$
and $0$B for the remaining questions. 
This is a contradiction, and so
the strategy contains no
additional $ (1,1,\star)$-question.

By symmetry,
the strategy contains no additional
$ (1,\star,1)$-question or
$ (\star,1,1)$-question either. 

\item[\ref{item3b}]
This follows directly from~\ref{item3a}.

\item[\ref{item3c}]
Let $ Q_1' $ be the $ (1,1,1)$-question 
and $Q_2'$ be the question consisting of the colors which do 
not occur on the first, second and third peg, respectively.

Suppose that $ Q_3' $ is
a $ (1,1,\star)$-question in the strategy.
By Lemma~\ref{lem2}\ref{item2g}, we can assume
that the three questions have the form~\eqref{stat1}.
By Lemma~\ref{lem2}\ref{item2f}, we get 
the form~\eqref{stat2}.

Then the possible secrets 
$ (2 \e 1 \e 3) $ and $ (3 \e 2 \e 1) $ 
receive the same combination of answers, 
namely $1$B for $ Q_1'$ and $Q_3'$,
and $0$B for the remaining questions. 
This is a contradiction, and so
the strategy contains no
additional $ (1,1,\star)$-question.

Symmetrically, it follows that
the strategy contains no 
$ (1,\star,1)$-question and no
$ (\star,1,1)$-question either. 

\item[\ref{item3d}]
This follows directly from~\ref{item3c}.
\hfill $ \square $
\end{enumerate}
\end{proof}

\begin{lemma}
\label{lem4}
For Static Black-Peg AB~Game with $p=3$ 
and $ c \ge 5 $, the following statements hold:

\begin{enumerate}[label=(\alph*)]
\item
\label{item4a}
If a feasible strategy contains a
$ (1,1,1) $-question, then the strategy contains in total 
at most three questions which are
$ (\grt 2,1,1) $-questions, 
$ (1,\grt 2,1) $-questions or
$ (1,1,\grt 2) $-questions. 
\item
\label{item4b}
If a feasible strategy is such that,
for each peg, there exists one color which does 
not occur on this peg,
then the strategy contains in total 
at most three questions which are
$ (\grt 2,1,1) $-questions, 
$ (1,\grt 2,1) $-questions or
$ (1,1,\grt 2) $-questions.
\end{enumerate}
\end{lemma}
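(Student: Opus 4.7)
The plan is to prove both parts by contradiction, in parallel with the proof of Lemma~\ref{lem3}. Assume the strategy contains at least four questions from the three listed types. By the pigeonhole principle, two of them have their ``$\geq 2$'' coordinate on the same peg; after permuting pegs, I may assume both are $(\geq 2,1,1)$-questions, say $Q_A = (a_1\e b_1\e c_1)$ and $Q_B = (a_2\e b_2\e c_2)$. Since $b_i$ (resp.\ $c_i$) occurs exactly once on peg~2 (resp.\ 3), we get $b_1 \neq b_2$ and $c_1 \neq c_2$.

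For part \ref{item4a}, permute colors so that the $(1,1,1)$-question is $Q^* = (1\e 2\e 3)$. Because $1,2,3$ each occur exactly once on their pegs, we additionally have $a_i \neq 1$, $b_i \neq 2$, $c_i \neq 3$ for $i=1,2$ (else they would appear at least twice on their respective pegs). I would then exhibit two distinct valid secrets that give identical answers to every question of the strategy, namely $(x\e b_2\e c_1)$ and $(x\e b_1\e c_2)$ for some color $x \notin \{b_1,b_2,c_1,c_2\}$, which exists since $c \ge 5$. For every question $Q = (q_1\e q_2\e q_3) \neq Q_A,Q_B$, the exactness of the colors $b_i,c_i$ on their pegs forces $q_2 \notin \{b_1,b_2\}$ and $q_3 \notin \{c_1,c_2\}$, so the peg-2 and peg-3 contributions vanish for both secrets; for $Q^*$ the same holds by $b_i \neq 2$, $c_i \neq 3$; and $Q_A, Q_B$ receive matching answers by a direct check, because the swap $(b_1,c_1) \leftrightarrow (b_2,c_2)$ preserves the number of matched pegs.

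The main obstacle is ensuring that both secrets are valid AB-game configurations, i.e., have three pairwise distinct entries; this requires $b_1 \neq c_2$ and $b_2 \neq c_1$. In the exceptional sub-cases $b_1 = c_2$ or $b_2 = c_1$, I would use a \emph{mirror} construction in which the ``$x$'' slot is moved to a different peg, such as $(b_2\e c_1\e x)$ vs.\ $(b_1\e c_2\e x)$, or exploit the presence of the remaining two listed-type questions (guaranteed by the assumption of at least four) to form an alternative indistinguishable pair. A coincidence like $b_1 = c_2$ forces the common color to appear simultaneously on peg~2 (in $Q_A$) and peg~3 (in $Q_B$), giving extra structural constraints that can be combined with Lemma~\ref{lem2}\ref{item2a} to close these cases.

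For part \ref{item4b}, the same skeleton applies, with the $(1,1,1)$-question $Q^*$ replaced by the missing-color information. Using the peg-$2$/peg-$3$ symmetric form of Lemma~\ref{lem2}\ref{item2f} (granted by the final symmetry clause of Lemma~\ref{lem2}), each of $Q_A$ and $Q_B$, viewed in its two exact-count coordinates, must satisfy $b_i = x_3$ or $c_i = x_2$, where $x_2,x_3$ denote the unique missing colors on pegs~2 and~3. Combined with $b_1 \neq b_2$ and $c_1 \neq c_2$, this forces (after possibly swapping $Q_A,Q_B$) $b_1 = x_3$ and $c_2 = x_2$, which plays the same structural role as the fixed colors of $Q^*$ did in part~\ref{item4a}, after which the same indistinguishable-secret construction concludes the proof.
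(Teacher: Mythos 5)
Your central construction is sound but proves far less than the lemma. The indistinguishability of $(x\e b_2\e c_1)$ and $(x\e b_1\e c_2)$ uses only that $Q_A$ and $Q_B$ are two questions of the same listed type; neither the $(1,1,1)$-question nor the other two listed-type questions enter the contradiction. What you obtain is therefore just the peg-permuted form of Lemma~\ref{lem2}\ref{item2b}: two same-type questions must overlap crosswise ($b_1=c_2$ or $b_2=c_1$). That statement caps each \emph{individual} type at three (via Lemma~\ref{lem2}\ref{item2c} and~\ref{item2e}), hence the total at nine, not at three -- and the crosswise-overlap ``exception'' is exactly where actual feasible strategies live. For instance, in the feasible strategy of Table~\ref{tab3c} ($c=6$), $Q_6=(5\e4\e6)$ and $Q_7=(6\e5\e4)$ are two $(1,\grt 2,1)$-questions whose exact-count entries overlap crosswise (color $6$ sits on peg~1 of $Q_7$ and on peg~3 of $Q_6$), and every peg of that strategy misses a color, so the hypotheses of part~\ref{item4b} hold there. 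Thus the sub-cases you defer are not edge cases but the entire content of the lemma, and the ``mirror'' pair $(b_2\e c_1\e x)$ vs.\ $(b_1\e c_2\e x)$ does not repair them: it places $b_2$ and $c_1$ on pegs for which you control neither their multiplicities nor their coincidences with other questions, so nothing forces the answers to agree.

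The missing work is precisely the interaction between questions of \emph{different} types. In the paper's proof of~\ref{item4a}, the $(1,1,1)$-question is itself a $(1,1,\star)$-, $(1,\star,1)$- and $(\star,1,1)$-question, so Lemma~\ref{lem2}\ref{item2e} limits each listed type to two occurrences; four listed-type questions must then split as $2{+}2{+}0$ or $2{+}1{+}1$ across the types, and each pattern is refuted by normalizing the cyclic forms of Lemma~\ref{lem2}\ref{item2c} on two different pairs of pegs and then exhibiting a tailored pair of indistinguishable secrets in each of roughly eight sub-cases. None of this appears in your proposal. For part~\ref{item4b} the paper also reduces differently: it proves the three missing colors are pairwise distinct (Lemma~\ref{lem2}\ref{item2f} and~\ref{item2g}), adjoins the question they form -- which becomes a $(1,1,1)$-question in the still-feasible extended strategy -- and applies part~\ref{item4a}; your direct rerun inherits the gaps of part~\ref{item4a}, and in particular your derived constraints $b_1=x_3$ and $c_2=x_2$ still do not exclude $b_2=c_1$, which invalidates one of your two secrets.
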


\begin{proof}
\begin{enumerate}[font=\bfseries]
\item[\ref{item4a}]
Let $ Q_1 $ be a  $ (1,1,1) $-question of the strategy.
We prove two assertions, which together prove part~\ref{item4a}. 

\begin{enumerate}[label=(\Roman*)]
\item\label{item4aI}
The strategy cannot have two 
$ (1,1,\grt 2) $-questions and two 
$ (1,\grt 2,1) $-questions at the same time.

Suppose that $ Q_2 $, $Q_3$ are two 
$ (1,1,\grt 2)$-questions and $ Q_4 $, $Q_5$ are two 
$ (1,\grt 2,1)$-questions in the strategy.
We present two indistinguishable
possible secrets, thus contradicting the assumed feasibility of the strategy.

By the definition of the AB~Game, Definition~\ref{def1}\ref{def1d} and
Lemma~\ref{lem2}\ref{item2c} applied to the first and the second 
peg and applied to the first and the third peg, we can assume
that the five questions have the form
\begin{eqnarray*}
\begin{array}{c || c | c | c}
\mbox{Peg} & 1 & 2 & 3
\\
\hline
\hline
Q_1 & 1 & 2 & 4
\\
\hline
\hline
Q_2 & 2 & 3 & \star_1
\\
\hline
Q_3 & 3 & 1 & \star_2
\\
\hline
\hline
Q_4 & 4 & \star_3 & 5
\\
\hline
Q_5 & 5 & \star_4 & 1
\end{array}
\end{eqnarray*}
where $\star_1,\star_2\notin\{1,4,5\}$ and $\star_3,\star_4\notin\{1,2,3\}$
are colors which occur at least twice on their peg.

Then the possible secrets 
$ (3 \e 2 \e 1) $ and $ (5 \e 1 \e 4) $ 
receive the same combination of answers, 
namely $1$B for $ Q_1$, $Q_3$, $Q_5$
and $0$B for the remaining questions.

Symmetrically, it follows that
the strategy cannot have two 
$ (1,1,\grt 2) $-question and two
$ (\grt 2,1,1) $-questions at the same time,
or $ (1,\grt 2,1) $-question and two
$ (\grt 2,1,1) $-questions at the same time.

\item\label{item4aII}
The strategy cannot have one 
$ (1,\grt 2,1) $-question, one 
$ (\grt 2,1,1) $-question, and two 
$ (1,1,\grt 2) $-questions at the same time.

Suppose that $ Q_2 $, $Q_3$ are the two
$ (1,1,\grt 2)$-questions, $ Q_4 $ is the 
$ (1,\grt 2,1)$-question and $ Q_5 $ is the 
$ (\grt 2,1,1)$-question of the strategy.
In each case or sub-case we present two indistinguishable
possible secrets, 
thus contradicting the assumed feasibility of the strategy.

By the definition of the AB~Game, Definition~\ref{def1}\ref{def1d} and
Lemma~\ref{lem2}\ref{item2c} applied to the first and the second 
peg, we can assume
that the five questions have the form
\begin{eqnarray*}
\label{stat4}
\begin{array}{c || c | c | c}
\mbox{Peg} & 1 & 2 & 3
\\
\hline
\hline
Q_1 & 1 & 2 & b
\\
\hline
Q_2 & 2 & 3 & \star_3
\\
\hline
Q_3 & 3 & 1 & \star_4
\\
\hline
\hline
Q_4 & 4 & \star_2 & d
\\
\hline
\hline
Q_5 & \star_1 & a & e
\end{array}
\hspace*{0.5em},
\end{eqnarray*}
where $\star_1$, $\star_2$, $\star_3$, $\star_4$ are colors
which occur at least twice on their peg,
and $a$, $b$, $d$, $e$ are colors\footnote{Note 
that the parameter $c$
is reserved for the number of colors. 
So we use the parameters $a,b,d,e,\dots$ here.}
which occur only once on their peg.

By Lemma~\ref{lem2}\ref{item2b} applied to the first and third peg
of questions $Q_1$ and $Q_4$, and to the second and third peg
of questions $Q_1$ and $Q_5$, we have
\begin{eqnarray*}
b=4 & \vee & d=1,
\end{eqnarray*}
and 
\begin{eqnarray*}
e=2 & \vee & a=b.
\end{eqnarray*}
This leads to the following four cases:

\begin{enumerate}[label=(\roman*)]
\item $a=b=4$.

Here the five questions have the form
\begin{eqnarray*}
\begin{array}{c || c | c | c}
\mbox{Peg} & 1 & 2 & 3
\\
\hline
\hline
Q_1 & 1 & 2 & 4
\\
\hline
\hline
Q_2 & 2 & 3 & \star_3
\\
\hline
Q_3 & 3 & 1 & \star_4
\\
\hline
\hline
Q_4 & 4 & \star_2 & d
\\
\hline
\hline
Q_5 & \star_1 & 4 & e
\end{array}
\hspace*{0.5em}.
\end{eqnarray*}
We have three sub-cases:

\begin{enumerate}[label=(\arabic*)]
\item All combinations of $ (d,e) $ except $d=3$ and except $e=1$. 

Here the possible secrets 
$ (3 \e 4 \e d) $ and $ (4 \e 1 \e e) $ 
receive the same combination of answers, 
namely $1$B for $ Q_3$, $Q_4$, $Q_5$
and $0$B for the remaining questions. 
(Note that $d \neq 4 $ and $ e \neq 4 $ as $Q_4$ and
$Q_5$ are questions in the AB~Game.)

\item $d=3$. 

Here the five questions have the form
\begin{eqnarray*}
\begin{array}{c || c | c | c}
\mbox{Peg} & 1 & 2 & 3
\\
\hline
\hline
Q_1 & 1 & 2 & 4
\\
\hline
\hline
Q_2 & 2 & 3 & \star_3
\\
\hline
Q_3 & 3 & 1 & \star_4
\\
\hline
\hline
Q_4 & 4 & \star_2 & 3
\\
\hline
\hline
Q_5 & \star_1 & 4 & e
\end{array}
\hspace*{0.5em}.
\end{eqnarray*}
Then the possible secrets 
$ (2 \e 4 \e 3) $ and $ (4 \e 3 \e e) $ 
receive the same combination of answers, 
namely $1$B for $ Q_2$, $Q_4$, $Q_5$
and $0$B for the remaining questions. 
(Note that $e \neq 3,4 $, as 
$Q_5 $ is a $ (\grt 2,1,1)$-question
having the color $e$ on the last peg.)

\pagebreak[4]

\item $e=1$. 

Here the five questions have the form
\begin{eqnarray*}
\begin{array}{c || c | c | c}
\mbox{Peg} & 1 & 2 & 3
\\
\hline
\hline
Q_1 & 1 & 2 & 4
\\
\hline
\hline
Q_2 & 2 & 3 & \star_3
\\
\hline
Q_3 & 3 & 1 & \star_4
\\
\hline
\hline
Q_4 & 4 & \star_2 & d
\\
\hline
\hline
Q_5 & \star_1 & 4 & 1
\end{array}
\hspace*{0.5em}.
\end{eqnarray*}
Then the possible secrets 
$ (1 \e 4 \e d) $ and $ (4 \e 2 \e 1) $ 
receive the same combination of answers, 
namely $1$B for $ Q_1$, $Q_4$, $Q_5$
and $0$B for the remaining questions. 
(Note that $d \neq 1,4 $, analogously to the case $d=3$.)

\end{enumerate}

\item $a \neq b=4 \wedge e=2$. 

Here the five questions have the form
\begin{eqnarray*}
\begin{array}{c || c | c | c}
\mbox{Peg} & 1 & 2 & 3
\\
\hline
\hline
Q_1 & 1 & 2 & 4
\\
\hline
\hline
Q_2 & 2 & 3 & \star_3
\\
\hline
Q_3 & 3 & 1 & \star_4
\\
\hline
\hline
Q_4 & 4 & \star_2 & d
\\
\hline
\hline
Q_5 & \star_1 & a & 2
\end{array}
\hspace*{0.5em}.
\end{eqnarray*}
Then the possible secrets 
$ (1 \e 3 \e 2) $ and $ (2 \e a \e 4) $ 
receive the same combination of answers, 
namely $1$B for $ Q_1$, $Q_2$, $Q_5$
and $0$B for the remaining questions. 
(Note that $a \neq 2,4 $.)

\pagebreak[3]

\item $d=1 \wedge e=2$. 

Here the five questions have the form
\begin{eqnarray*}
\begin{array}{c || c | c | c}
\mbox{Peg} & 1 & 2 & 3
\\
\hline
\hline
Q_1 & 1 & 2 & b
\\
\hline
\hline
Q_2 & 2 & 3 & \star_1
\\
\hline
Q_3 & 3 & 1 & \star_2
\\
\hline
\hline
Q_4 & 4 & \star_3 & 1
\\
\hline
\hline
Q_5 & \star_4 & a & 2
\end{array}
\hspace*{0.5em}.
\end{eqnarray*}
Then the possible secrets 
$ (3 \e a \e 1) $ and $ (4 \e 1 \e 2) $ 
receive the same combination of answers, 
namely $1$B for $ Q_3$, $Q_4$, $Q_5$
and $0$B for the remaining questions. 
(Note that $a \neq 1,3 $.)

\item $a=b \neq4 \wedge d=1$. 

\nopagebreak

Here the five questions have the form
\begin{eqnarray*}
\begin{array}{c || c | c | c}
\mbox{Peg} & 1 & 2 & 3
\\
\hline
\hline
Q_1 & 1 & 2 & a
\\
\hline
Q_2 & 2 & 3 & \star_3
\\
\hline
Q_3 & 3 & 1 & \star_4
\\
\hline
\hline
Q_4 & 4 & \star_2 & 1
\\
\hline
\hline
Q_5 & \star_1 & a & e
\end{array}
\hspace*{0.5em}.
\end{eqnarray*}
Then the possible secrets 
$ (3 \e 2 \e 1) $ and $ (4 \e 1 \e a) $ 
receive the same combination of answers, 
namely $1$B for $ Q_1$, $Q_3$, $Q_4$
and $0$B for the remaining questions. 
(Note that $a \neq 1,4 $.)

\end{enumerate}

Symmetrically, it follows that
the strategy cannot have one 
$ (1,1,\allowbreak\grt 2) $-question, one 
$ (\grt 2,1,1) $-question and two 
$ (1, \grt 2,1) $-questions, or one $ (1,1,\grt 2) $-question, one 
$ (1,\grt 2,1) $-question and two 
$ (\grt 2,1,1) $-questions at the same time. 
\end{enumerate}

\item[\ref{item4b}]
Consider a feasible strategy such that there are colors $q_1$, $q_2$, and $q_3$ 
which do not occur on pegs~1, 2, and 3, respectively. Let $Q_1=(q_1\e q_2\e 
q_3)$. We aim to apply Lemma~\ref{lem4}\ref{item4a} by adding the question $Q_1$ 
to the given strategy. To be able to do so, we have to verify that it is a valid 
question, i.e., the three colors are distinct. For doing so, we can assume that 
the strategy contains four questions which are
$ (\grt 2,1,1) $-questions, $ (1,\grt 2,1) $-questions or $ (1,1,\grt 2) 
$-questions (because otherwise the conclusion of  Lemma~\ref{lem4}\ref{item4a} is fulfilled).

By Lemma~\ref{lem2}\ref{item2d} and possibly switching the questions, 
we can assume that there are two $ (1,1,\grt 2) $-questions $Q_2$ and $Q_3$, a $ 
(1,\grt 2,1) $-question $Q_4$, and a question $Q_5$ which is either also a $ (1,\grt 2,1) $-question or a $ (\grt 2,1,1) $-question. This yields two cases.

Consider first the case where both $Q_4$ and $Q_5$ are $ (1,\grt 2,1) $-questions. Applying 
Lemma~\ref{lem2}\ref{item2g}, applied once to $Q_2$, $Q_3$ and once to $Q_4$, 
$Q_5$, yields the form
\begin{eqnarray*}
\begin{array}{c || c | c | c}
\mbox{Peg} & 1 & 2 & 3
\\
\hline
\hline
Q_2 & q_2 & \star_1 & \star_2
\\
\hline
Q_3 & \star_1 & q_1 & \star_3
\\
\hline
\hline
Q_4 & q_3 & \star_5 & \star_4
\\
\hline
Q_5 & \star_4 & \star_6 & q_1
\end{array}
\end{eqnarray*}
where $q_1\neq q_2$ and $q_1\neq q_3$,
and $\star_2$, $\star_3$, $\star_5$, $\star_6$ are colors
which occur at least twice on their peg,
and $\star_1$, $\star_4$ are colors occurring only once on their peg.
However, we also know that $Q_2$ is a $ (1,1,\grt 2) 
$-question, which implies that $q_2\neq q_3$. Hence, all three colors are distinct.

If $Q_5$ is a $ (\grt 2,1,1) $-question, the reasoning is easier: applying 
Lemma~\ref{lem2}\ref{item2f} thrice, namely to $Q_2$, $Q_4$, and $Q_5$ proves immediately that $q_1\neq q_2$, $q_1\neq q_3$, and $q_2\neq q_3$, respectively. Hence, again, all three colors $q_1$, $q_2$, and $q_3$ are distinct.

Thus, in either case $Q_1$ is a valid question. It follows that the 
strategy obtained by adding question $Q_1$ to the already feasible strategy is 
feasible. In this extended strategy, $Q_1$ is a $(1,1,1)$-question. Hence, Lemma~\ref{lem4}\ref{item4a} applies, the conclusion being that the modified strategy (and thus the original one) 
contains in total 
at most three questions which are
$ (\grt 2,1,1) $-questions, 
$ (1,\grt 2,1) $-questions or
$ (1,1,\grt 2) $-questions. This completes the proof.
\hfill $ \square $
\end{enumerate}

\end{proof}

\begin{lemma}
\label{lem5}
For Static Black-Peg AB~Game with $p=3$ 
and $ c \ge 5 $, the following statements hold:

\begin{enumerate}[label=(\alph*)]
\item
\label{item5a}
If a feasible strategy contains no $ (1,1,1) $-question, then it contains in total 
at most six questions that are $ (1,1,\grt 2) $-questions, 
$ (1,\grt 2,1) $-questions or
$ (\grt 2,1,1) $-questions. 
\item
\label{item5b}
If a feasible strategy contains no $ (1,1,1) $-question,
and for each of two pegs there is one color which does 
not occur on this peg, then the strategy contains in total 
at most five questions that are $ (1,1,\grt 2) $-questions, 
$ (1,\grt 2,1) $-questions or
$ (\grt 2,1,1) $-questions. 
\end{enumerate}
\end{lemma}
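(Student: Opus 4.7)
My plan is to prove both parts by contradiction, relying on Lemmas~\ref{lem2} and~\ref{lem4} (in particular the normalization provided by Lemma~\ref{lem2}\ref{item2c}) to reduce the hypothetical counterexamples to a small number of concrete configurations, and then to exhibit, in each configuration, a pair of indistinguishable possible secrets.

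For part~\ref{item5a}, I would first invoke Lemma~\ref{lem2}\ref{item2e} and its symmetric analogs: since the strategy contains no $(1,1,1)$-question, the numbers of $(1,1,\grt 2)$-, $(1,\grt 2,1)$-, and $(\grt 2,1,1)$-questions are each at most~$3$. Assuming for contradiction that their total is at least~$7$, the distribution of these three counts must be, up to permutation of pegs, either $(3,3,1)$ or $(3,2,2)$. In the $(3,3,1)$ case, applying Lemma~\ref{lem2}\ref{item2c} to each triple places them into the cyclic form on two different pairs of pegs; a short case analysis on how the colors of the two triples interact should then produce two indistinguishable secrets, analogous to the argument of Lemma~\ref{lem4}\ref{item4a}(I). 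In the $(3,2,2)$ case, I would normalize the triple using Lemma~\ref{lem2}\ref{item2c} and constrain the two pairs via Lemma~\ref{lem2}\ref{item2b}; this mirrors the structure of Lemma~\ref{lem4}\ref{item4a}(II), and I expect an analogous construction of witness secrets to yield the contradiction.

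For part~\ref{item5b}, I would exploit the missing colors to build a virtual question. Let $\alpha$ and $\beta$ denote the missing colors on the two relevant pegs (say pegs~$1$ and~$2$), and choose a color $\gamma \notin \{\alpha,\beta\}$; then $Q^{*}=(\alpha \e \beta \e \gamma)$ is a valid AB~Game question, and adding it to the strategy preserves feasibility, since adding a question cannot merge previously distinguishable secrets. If $\gamma$ can be chosen so that it is also missing on peg~$3$, then $Q^{*}$ is a $(1,1,1)$-question of the augmented strategy, and Lemma~\ref{lem4}\ref{item4a} bounds the count of the three relevant types in the augmented---and hence in the original---strategy by $3 \le 5$. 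Otherwise, every admissible $\gamma$ already occurs on peg~$3$ of the original strategy, so $Q^{*}$ is a $(1,1,\grt 2)$-question of the augmented strategy, which still contains no $(1,1,1)$-question; applying part~\ref{item5a} to the augmented strategy then yields a bound of~$6$, and subtracting the contribution of $Q^{*}$ itself gives at most~$5$ for the original strategy.

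The main obstacle will be the casework in part~\ref{item5a}, especially the $(3,2,2)$ case, where several sub-configurations arise depending on how the colors of the two pairs interact with those of the normalized triple; each sub-configuration requires its own witness pair of indistinguishable secrets, in the spirit of the long case analysis of Lemma~\ref{lem4}\ref{item4a}(II). A secondary subtlety in part~\ref{item5b} is to verify that $Q^{*}$ is always a valid question with three distinct colors and to handle the degenerate situation $\alpha = \beta$; this should be resolvable by switching to a different pair of pegs, or by a direct auxiliary argument exploiting Lemma~\ref{lem2}\ref{item2a}.
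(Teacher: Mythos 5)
Your plan coincides with the paper's own proof in both parts. For~\ref{item5a} the paper likewise uses Lemma~\ref{lem2}\ref{item2e} (and its symmetric analogs) to cap each of the three types at three, so that a total of seven forces a $(3,3,1)$ or $(3,2,2)$ distribution up to symmetry, and then rules out each distribution by normalizing with Lemma~\ref{lem2}\ref{item2c} and~\ref{item2b} and exhibiting indistinguishable secrets; for~\ref{item5b} it likewise augments the strategy with a question built from the two missing colors plus a third color chosen to occur on the remaining peg, and applies part~\ref{item5a} to the augmented strategy, exactly mirroring Lemma~\ref{lem4}\ref{item4b}. Two caveats. First, essentially all of the work lies in the casework you defer: in the paper the $(3,2,2)$ configuration splits into five sub-cases, each with its own witness pair of secrets, while the $(3,3,1)$ case is settled not by enumeration but by a uniform choice of one question from each normalized triple avoiding the two single-occurrence colors of the seventh question; your proposal correctly predicts the shape of this analysis but does not carry it out. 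Second, your suggested resolutions of the degenerate case $\alpha=\beta$ would not go through: Lemma~\ref{lem2}\ref{item2a} only limits the number of missing colors \emph{per peg} and says nothing about distinctness across pegs, and the hypothesis of~\ref{item5b} guarantees missing colors on only two pegs, so there need not be an alternative pair to switch to. The tool the paper uses here is Lemma~\ref{lem2}\ref{item2f} (respectively its symmetric analog for the relevant pair of pegs), whose first conclusion is precisely that the colors missing on two different pegs are distinct.
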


\begin{proof}
\begin{enumerate}[font=\bfseries]
\item[\ref{item5a}]
We prove two assertions, which together prove part~\ref{item5a}. 

\begin{enumerate}[label=(\Roman*)]
\item The strategy cannot have three 
$ (1,1,\grt 2) $-questions, three $ (1,\grt 2,1) $-questions 
and one $ (\grt 2,1,1) $-question at the same time.

Suppose that $Q_1$, $ Q_2 $, $Q_3$ are 
$ (1,1,\grt 2)$-questions, $ Q_4 $, $Q_5$, $Q_6$ 
are $ (1,\grt 2,1)$-questions and
$ Q_7 $ is a $ (\grt 2,1,1) $-question of the strategy.

By Lemma~\ref{lem2}\ref{item2c} applied to the first and the second 
peg and applied to the first and the third peg, we can assume
that the seven questions have the form
\begin{eqnarray*}
\begin{array}{c || c | c | c}
\mbox{Peg} & 1 & 2 & 3
\\
\hline
\hline
Q_1 & 1 & 2 & \star_5
\\
\hline
Q_2 & 2 & 3 & \star_6
\\
\hline
Q_3 & 3 & 1 & \star_7
\\
\hline
\hline
Q_4 & 4 & \star_2 & 5
\\
\hline
Q_5 & 5 & \star_3 & 6
\\
\hline
Q_6 & 6 & \star_4 & 4
\\
\hline
\hline
Q_7 & \star_1 & a & b
\end{array}
\hspace*{0.5em},
\end{eqnarray*}
where $\star_1$, $\star_2$, $\star_3$, $\star_4$, $\star_5$,
$\star_6$, $\star_7$ are colors
which occur at least twice on their peg,
and $a$, $b$ are colors occurring only once on their peg.
In particular, $ a \notin \{1,2,3\} $ and 
$ b \notin \{4,5,6\} $.
Note that the appearance of a $7$-th
color~$\star_1$ in the questions already proves the assertion for
$c=5,6$ (or, expressed differently, that the assumptions imply $c\ge 7$).

Choose a question $Q = (q_1\e q_2\e q_3) $ 
from the set 
$ S_1 := \{Q_1,Q_2,Q_3\} $ so that $b \neq q_2 $.
(Note that at least two questions in $ S_1$ fulfill this condition.)

Similarly, choose a question $Q' = (q_4\e q_5\e q_6) $ 
from the the set 
$ S_2 := \{Q_4,Q_5,Q_6\} $ so that $a \neq q_6 $.

By construction, the possible secrets 
$ (q_1 \e a \e q_6) $ and $ (q_4 \e q_2 \e b) $ 
receive the same combination of answers, 
namely $1$B for $Q$, $Q'$, $Q_7$
and $0$B for the remaining questions, contradicting
the feasibility of the strategy. 

As an example for $ a=4$, $b=3$ choose $ Q = Q_3 $ and $ Q' = Q_5 $. 
Then the possible secrets 
$ (3 \e 4 \e 6) $ and $ (5 \e 1 \e 3) $ 
receive the same combination of answers, 
namely $1$B for $Q_3$, $Q_5$, $Q_7$
and $0$B for the remaining questions. 

Symmetrically, it follows that 
the strategy cannot have three 
$ (1,1,\allowbreak\grt 2) $-questions, three $ (\grt 2,1,1) $-questions 
and one $ (1,\grt 2,1) $-question, or three 
$ (1,\grt 2,1) $-questions, three $ (\grt 2,1,1) $-questions 
and one $ (1, 1,\allowbreak\grt 2) $-question at the same time.

\item The strategy cannot have three 
$ (1,1,\grt 2) $-questions, two $ (1,\grt 2,1) $-questions 
and two $ (\grt 2,1,1) $-questions at the same time.

Suppose that $Q_1$, $ Q_2 $ and $Q_3$ are 
$ (1,1,\grt 2)$-questions, $ Q_4 $ and $Q_5$ are 
$ (1,\grt 2,1)$-questions and $Q_6$ and 
$ Q_7 $ are $ (\grt 2,1,1) $-questions of the strategy.
Below, we consider an exhaustive set of cases, presenting in each
of them two indistinguishable possible secrets, thus contradicting
the feasibility of the strategy.

By Lemma~\ref{lem2}\ref{item2b}, 
applied to the first and the third 
peg and applied to the second and the third peg, 
by possibly switching questions $Q_4$ and $Q_5$,
and questions $Q_6$ and $Q_7$, and by applying
Lemma~\ref{lem2}\ref{item2c} to the first and the second peg, 
we can assume that the seven questions have the form
\begin{eqnarray*}
\begin{array}{c || c | c | c}
\mbox{Peg} & 1 & 2 & 3
\\
\hline
\hline
Q_1 & 1 & 2 & \star_5
\\
\hline
\hline
Q_2 & 2 & 3 & \star_6
\\
\hline
Q_3 & 3 & 1 & \star_7
\\
\hline
\hline
Q_4 & 4 & \star_3 & d
\\
\hline
Q_5 & 5 & \star_4 & 4
\\
\hline
\hline
Q_6 & \star_1 & a & e
\\
\hline
Q_7 & \star_2 & b & a
\end{array}
\hspace*{0.5em},
\end{eqnarray*}
where $\star_1$, $\star_2$, $\star_3$, $\star_4$, $\star_5$,
$\star_6$, $\star_7$ are colors
which occur at least twice on their peg.
and $a$, $b$, $d$, $e$ are colors occurring
only once on their peg.

Here the appearance of a $6$-th
color~$\star_1$ in the questions already proves the assertion for
$c=5$ (or, expressed differently, that the assumptions imply $c\ge 6$).

We consider the following four cases:

\begin{enumerate}[label=(\roman*)]
\item $a\neq 5 \wedge b \neq 4$.

\nopagebreak

Then the possible secrets 
$ (1 \e b \e 4) $ and $ (5 \e 2 \e a) $ 
receive the same combination of answers, 
namely $1$B for $ Q_1$, $Q_5$, $Q_7$
and $0$B for the remaining questions. 
(Note that $a\not= 2,5$ and $b \not=1,4 $.)

\item $a=5 \wedge e \neq 3$.

Here the seven questions have the form
\begin{eqnarray*}
\begin{array}{c || c | c | c}
\mbox{Peg} & 1 & 2 & 3
\\
\hline
\hline
Q_1 & 1 & 2 & \star_5
\\
\hline
Q_2 & 2 & 3 & \star_6
\\
\hline
Q_3 & 3 & 1 & \star_7
\\
\hline
\hline
Q_4 & 4 & \star_3 & d
\\
\hline
Q_5 & 5 & \star_4 & 4
\\
\hline
\hline
Q_6 & \star_1 & 5 & e
\\
\hline
Q_7 & \star_2 & b & 5
\end{array}
\hspace*{0.5em},
\end{eqnarray*}
Then the possible secrets 
$ (2 \e 5 \e 4) $ and $ (5 \e 3 \e e) $ 
receive the same combination of answers, 
namely $1$B for $ Q_2$, $Q_5$, $Q_6$
and $0$B for the remaining questions. 
(Note that $e\not= 3,5$.)

\pagebreak[3]

\item $a=5 \wedge e=3$.

Here the seven questions have the form
\begin{eqnarray*}
\begin{array}{c || c | c | c}
\mbox{Peg} & 1 & 2 & 3
\\
\hline
\hline
Q_1 & 1 & 2 & \star_5
\\
\hline
Q_2 & 2 & 3 & \star_6
\\
\hline
Q_3 & 3 & 1 & \star_7
\\
\hline
\hline
Q_4 & 4 & \star_3 & d
\\
\hline
Q_5 & 5 & \star_4 & 4
\\
\hline
\hline
Q_6 & \star_1 & 5 & 3
\\
\hline
Q_7 & \star_2 & b & 5
\end{array}
\hspace*{0.5em},
\end{eqnarray*}
Then the possible secrets 
$ (1 \e 5 \e 4) $ and $ (5 \e 2 \e 3) $ 
receive the same combination of answers, 
namely $1$B for $ Q_1$, $Q_5$, $Q_6$
and $0$B for the remaining questions. 

\pagebreak[4]

\item $a\neq5 \wedge b=4 \wedge d\neq1 $.

Here the seven questions have the form
\begin{eqnarray*}
\begin{array}{c || c | c | c}
\mbox{Peg} & 1 & 2 & 3
\\
\hline
\hline
Q_1 & 1 & 2 & \star_5
\\
\hline
Q_2 & 2 & 3 & \star_6
\\
\hline
Q_3 & 3 & 1 & \star_7
\\
\hline
\hline
Q_4 & 4 & \star_3 & d
\\
\hline
Q_5 & 5 & \star_4 & 4
\\
\hline
\hline
Q_6 & \star_1 & a & e
\\
\hline
Q_7 & \star_2 & 4 & a
\end{array}
\hspace*{0.5em},
\end{eqnarray*}
Then the possible secrets 
$ (1 \e 4 \e d) $ and $ (4 \e 2 \e a) $ 
receive the same combination of answers, 
namely $1$B for $ Q_1$, $Q_4$, $Q_7$
and $0$B for the remaining questions. 
(Note that $a\not= 2, 4$ and $ d \not= 1,4$.)

\pagebreak[3]

\item $a\neq5 \wedge b=4 \wedge d=1 $.

Here the seven questions have the form
\begin{eqnarray*}
\begin{array}{c || c | c | c}
\mbox{Peg} & 1 & 2 & 3
\\
\hline
\hline
Q_1 & 1 & 2 & \star_5
\\
\hline
Q_2 & 2 & 3 & \star_6
\\
\hline
Q_3 & 3 & 1 & \star_7
\\
\hline
\hline
Q_4 & 4 & \star_3 & 1
\\
\hline
Q_5 & 5 & \star_4 & 4
\\
\hline
\hline
Q_6 & \star_1 & a & e
\\
\hline
Q_7 & \star_2 & 4 & a
\end{array}
\hspace*{0.5em},
\end{eqnarray*}
Then the possible secrets 
$ (3 \e 4 \e 1) $ and $ (4 \e 1 \e a) $ 
receive the same combination of answers, 
namely $1$B for $ Q_3$, $Q_4$, $Q_7$
and $0$B for the remaining questions. 
(Note that $a\not= 1,4$.)
\end{enumerate}

Again, symmetric arguments prove that 
the strategy cannot simultaneously have three 
$ (1,\grt 2,1) $-questions, two $ (\grt 2,1,1) $-questions 
and two $ (1, 1,\allowbreak\grt 2) $-questions, or three 
$ (\grt 2,1,1) $-questions, two $ (1,\grt 2,1) $-questions 
and two $ (1, 1,\grt 2) $-questions.

This completes the proof of \ref{item5a}.
\end{enumerate}

\item[\ref{item5b}]
Similarly to the proof of Lemma~\ref{lem4}\ref{item4b}, this case can be proved 
by modifying the strategy in order to be able to apply 
Lemma~\ref{lem5}\ref{item5a}. Let $q_1,q_2$ be the colors which do 
not occur on the first and second peg, respectively. By 
Lemma~\ref{lem2}\ref{item2f}, $q_1\neq q_2$ holds. Now, let $ Q_1 = (q_1\e q_2\e q_3)$, 
where the third color $q_3$ is chosen in such a way that it 
appears at least once on the third peg of another question but is different from 
$q_1$ and $q_2$. Such a color $q_3$ exists because 
by Lemma~\ref{lem2}\ref{item2a} there are at least $c-1\ge 4$ 
distinct colors on the third peg, so we have at least two to choose from (as we 
have to exclude $q_1$ and $q_2$).

We have thus shown that we can add $Q_1$ to the given strategy, where it becomes 
a $(1,1,\grt 2)$-question. The extended strategy fulfills the assumptions of 
Lemma~\ref{lem5}\ref{item5a}, which completes the proof.
\hfill $ \square $
\end{enumerate}
\end{proof}

Now we come to the actual proof of optimality.
For $c=4$ it can be easily checked by brute-force search
that there is no $ (\lfloor (3c-1)/2 \rfloor-1) $-strategy,
i.e., no strategy with $ k = \lfloor (3 \cdot 4-1)/2  
\rfloor -2 = 3 $ questions.
Thus, assume in the following that $ c \ge 5 $.
The presented $ \lfloor (3c-1)/2 \rfloor $-strategy
has $k=\lfloor (3c-1)/2 \rfloor-1$ questions. We
show that this is optimal by considering any feasible strategy
and proving that it must contain at least $k$ questions. 

In the following let $e$ be the number of $ (1,1,1)$-questions 
and $f$ be the total number of 
$ (1,1,\grt 2)$-questions, $ (1,\grt 2,1)$-questions and
$ (\grt 2,1,1)$-questions of this assumed strategy. Further,
for $i=1,2,3$ let $l_i$ be the number of colors which
occur exactly once on the $i$-th peg of the strategy.

We will need the following observation, which is easy to see.

\begin{observation}
\label{lrel}
Every feasible strategy contains at least
$$
l_1 + l_2 + l_3 - 2e -f
$$
questions.
Hence, we can establish optimality of our strategy by showing
that $l_1 + l_2 + l_3 - 2e -f\ge k$.
\end{observation}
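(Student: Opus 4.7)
The plan is to prove the bound by a direct double-counting argument that classifies each question by how many of its pegs carry a color appearing exactly once on that peg. This connects $l_1+l_2+l_3$ (which counts, for each peg, the colors appearing only once on it) with the total number of questions, using the question types from Definition~\ref{def1}\ref{def1d} as the organizing principle.

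First I would observe that for every question $Q=(q_1\e q_2\e q_3)$ and every peg $i\in\{1,2,3\}$, the color $q_i$ contributes $1$ to $l_i$ precisely when $Q$ is the unique question of the strategy placing $q_i$ on peg $i$; call such an $i$ a \emph{singleton peg} of $Q$. Summing over questions, $l_1+l_2+l_3$ equals the total number of singleton pegs, so each question contributes between $0$ and $3$. By Definition~\ref{def1}\ref{def1d}, a $(1,1,1)$-question has $3$ singleton pegs, each of the types $(1,1,\grt 2)$, $(1,\grt 2,1)$, $(\grt 2,1,1)$ has exactly $2$, a question with exactly one singleton peg contributes $1$, and questions with no singleton peg contribute $0$.

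Let $N$ denote the total number of questions of the strategy, and write $g$ for the number of questions with exactly one singleton peg and $h$ for the number with none. Partitioning all questions by these four classes yields
\begin{align*}
N &= e + f + g + h,\\
l_1+l_2+l_3 &= 3e + 2f + g.
\end{align*}
Subtracting $2e+f$ from the second equation gives $l_1+l_2+l_3 - 2e - f = e + f + g \le e + f + g + h = N$, which is exactly the claimed lower bound on the number of questions. The optimality remark then follows immediately: since $N\ge l_1+l_2+l_3 - 2e - f$ holds for \emph{every} feasible strategy, establishing $l_1+l_2+l_3-2e-f\ge k$ in all cases rules out any feasible strategy with fewer than $k$ questions, so the presented one is optimal.

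The main obstacle is essentially nonexistent, consistent with the statement being flagged as an observation that is easy to see; the only point requiring care is verifying exhaustively that the four classes above (by number of singleton pegs) partition all questions, which is transparent from the $(a_1,a_2,a_3)$-notation. No feasibility hypothesis on the strategy is actually used in the inequality; feasibility enters only when this bound is combined with Lemmas~\ref{lem1}--\ref{lem5} to bound $l_1,l_2,l_3$ below and $e,f$ above.
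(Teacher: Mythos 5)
Your double-counting argument is correct, and it is exactly the justification the paper leaves implicit (the observation is stated as ``easy to see'' with no written proof): $l_1+l_2+l_3$ counts each question once per singleton peg, so subtracting $2e+f$ reduces every question's contribution to at most $1$. Nothing further is needed.
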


We distinguish between even and odd numbers of colors.

\begin{enumerate}[label=(\Roman*)]
\item $c$ even.
\label{heven}

We have
\begin{eqnarray*}
k & = & \frac{3c}{2} - 2.
\end{eqnarray*}
By Lemma~\ref{lem2}\ref{item2a} for each peg  
there exists at most one color which does not  
occur throughout the entire strategy.
We start by proving two claims which verify
optimality in two restricted cases.

\begin{claim}\label{claim missing color}
If $ l_i \le \frac{c}{2} $
for some peg $i \in \{1,2,3\} $ on which
not all colors occur, then the strategy contains
at least $k$ questions.
\end{claim}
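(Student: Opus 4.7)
The plan is to use a direct single-peg counting argument rather than invoking Observation~\ref{lrel} in its full generality, since only information about peg $i$ is really needed.

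First, I combine the hypothesis that not all colors occur on peg $i$ with Lemma~\ref{lem2}\ref{item2a} to conclude that peg $i$ contains exactly $c-1$ distinct colors. By the definition of $l_i$, exactly $l_i$ of these colors appear on peg $i$ in just one question, while the remaining $c-1-l_i$ colors appear on peg $i$ at least twice each.

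Second, I count total color-occurrences on peg $i$. Since every question contributes exactly one color to peg $i$, this total equals the number $n$ of questions in the strategy. Summing the lower bounds on occurrences over the $c-1$ distinct colors that appear on peg $i$, I obtain
\begin{equation*}
n \;\ge\; l_i + 2(c-1-l_i) \;=\; 2c - 2 - l_i.
\end{equation*}
Plugging in the hypothesis $l_i \le c/2$ gives $n \ge 2c - 2 - c/2 = 3c/2 - 2 = k$, which is the desired conclusion.

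I do not expect any significant obstacle: the argument reduces to a clean one-peg double-counting bound whose only nontrivial ingredient is the cap of at most one missing color provided by Lemma~\ref{lem2}\ref{item2a}. The more involved structural lemmas about $(1,1,1)$- and $(1,1,\star)$-questions established earlier in the section are not needed here; they will presumably be invoked in the complementary regime, where every peg carries all $c$ colors or $l_i > c/2$ on every peg, to finish the optimality proof.
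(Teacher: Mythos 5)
Your proof is correct and takes essentially the same approach as the paper: both count color-occurrences on peg~$i$, using Lemma~\ref{lem2}\ref{item2a} to guarantee that $c-1$ colors occur there, of which $c-1-l_i$ appear at least twice, giving at least $l_i + 2(c-1-l_i) = 2c-2-l_i \ge \frac{3c}{2}-2 = k$ questions. The paper merely states this more tersely by plugging in the worst case $l_i = \frac{c}{2}$ directly.
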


This is clear because $ l_i \le \frac{c}{2} $ means that
$\frac c2-1$ colors appear at least twice on peg~$i$, yielding
at least
\begin{eqnarray*}
\frac{c}{2} + 2 \cdot 
\left( \frac{c}{2}-1 \right) \; = \; 
\frac{3c}{2}-2 \; = \; k
\end{eqnarray*}
questions.

\pagebreak[3]

\begin{claim}\label{claim no missing color}
If $ l_i \le \frac{c}{2}+2 $
for some peg $i \in \{1,2,3\} $ on which
all colors occur, then the strategy contains
at least $k$ questions.
\end{claim}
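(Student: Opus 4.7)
The plan is a direct double-counting argument in the spirit of Claim~\ref{claim missing color}, exploiting the stronger hypothesis that \emph{all} $c$ colors occur on peg~$i$. First I would partition the $c$ colors appearing on peg~$i$ into those appearing exactly once (there are $l_i$ such colors) and those appearing at least twice (there are $c - l_i$ such colors, since every color is present). As each question contributes exactly one color to peg~$i$, the total number of questions equals the total multiplicity on that peg, which is bounded below by
\begin{eqnarray*}
l_i + 2(c - l_i) \;=\; 2c - l_i.
\end{eqnarray*}

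Then I would substitute the hypothesis $l_i \le c/2 + 2$ to obtain
\begin{eqnarray*}
2c - l_i \;\ge\; 2c - \left(\frac{c}{2}+2\right) \;=\; \frac{3c}{2} - 2 \;=\; k,
\end{eqnarray*}
which is exactly the desired lower bound.

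There is essentially no obstacle here: the threshold $c/2 + 2$ in the hypothesis is tuned precisely so that the na\"ive counting inequality is tight. Compared with Claim~\ref{claim missing color}, which allows only the weaker count $l_i + 2(c - 1 - l_i)$ because one color may be missing from peg~$i$, the extra color guaranteed to appear (and its mandatory second occurrence) shifts the admissible range of $l_i$ up by exactly $2$, which explains why the threshold jumps from $c/2$ to $c/2 + 2$. No structural result from Lemmas~\ref{lem2}--\ref{lem5} is needed at this stage; those will enter only when these two claims are combined with the complementary case in which $l_i$ is large on every peg, at which point bounds on $e$ and $f$ (via Observation~\ref{lrel}) will complete the optimality proof for even~$c$.
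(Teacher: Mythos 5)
Your proposal is correct and follows essentially the same counting argument as the paper: both bound the number of questions from below by the total multiplicity on peg~$i$, namely $l_i + 2(c-l_i) = 2c - l_i \ge \frac{3c}{2}-2 = k$ under the hypothesis $l_i \le \frac{c}{2}+2$. Your added remark explaining the shift of the threshold by $2$ relative to Claim~\ref{claim missing color} is accurate but not needed for the proof itself.
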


Here, the assumption $ l_i \le \frac{c}{2}+2 $
implies that we have at least 
\begin{eqnarray*}
\frac{c}{2} + 2 + 2 \cdot 
\left( \frac{c}{2}-2 \right) \; = \; 
\frac{3c}{2}-2 \; = \; k
\end{eqnarray*}
questions.

Now we finish the case~\ref{heven} of the proof by
considering two sub-cases. 

\begin{enumerate}[label=(\roman*)]
\item 
\label{casc}
For each of the three pegs there exists 
one color which does not occur throughout the entire
strategy.

By Claim~\ref{claim missing color}, the situation which remains
to be considered is when $l_i\ge\frac c2+1$ for every $i\in\{1,2,3\}$.
In this case,
\begin{eqnarray*}
l_1+l_2+l_3 \;\ge\; 3 \cdot \left( \frac{c}{2} + 1 \right) 
\; = \; \frac{3c}{2} +3 \; = \;
k + 5.
\end{eqnarray*}
By Observation~\ref{lrel}, it thus remains to be shown that
\begin{eqnarray*}
\label{rel5}
2e + f & \le & 5.
\end{eqnarray*}
By Lemma~\ref{lem3}~\ref{item3b}, at most 
two $ (1,1,1) $-questions exist.

\begin{enumerate}[label=(\arabic*)]
\item\label{caseAi} If we have two $(1,1,1)$-questions, then 
Lemma~\ref{lem3}~\ref{item3a} yields
\begin{eqnarray*}
2e +f \; = \; 2 \cdot 2 + 0 \; = \; 4. 
\end{eqnarray*}
\item\label{caseAii} If we have one $ (1,1,1)$-question, then 
Lemma~\ref{lem4}~\ref{item4a} yields
\begin{eqnarray*}
2e +f \; \le \; 2 \cdot 1 + 3 \; = \; 5. 
\end{eqnarray*}
\item If we have no $ (1,1,1)$-question, then 
Lemma~\ref{lem4}~\ref{item4b} yields
\begin{eqnarray*}
2e +f \; \le \; 2 \cdot 0 + 3 \; = \; 3. 
\end{eqnarray*}
\end{enumerate}

So we have finished the optimality proof for case~\ref{casc}.

\item For at most two of the three pegs there exists 
one color which does not occur throughout the entire
strategy.
\label{casd}

By Claims~\ref{claim missing color} and~\ref{claim no missing color},
it remains to consider the situation where $l_i\ge\frac c2+1$ for at most
two $i\in\{1,2,3\}$ and $l_i\ge\frac{c}{2}+3$ for the remaining ones, i.e.,
\begin{eqnarray*}
l_1+l_2+l_3&\ge&\left( \frac{c}{2} + 3 \right) 
+ 2 \cdot \left( \frac{c}{2} + 1 \right) \\
&=&\frac{3c}{2} +5 \\
&=&k + 7. 
\end{eqnarray*}
Again using Observation~\ref{lrel}, we thus have established that there are
at least $k$~questions if we can show that
\begin{eqnarray*}
\label{rel7}
2e + f & \le & 7.
\end{eqnarray*}
Again we have at most 
two $ (1,1,1) $-questions.

\begin{enumerate}[label=(\arabic*)]
\item If we have two $ (1,1,1)$-questions,
as in case~\ref{casc}\ref{caseAi}
\begin{eqnarray*}
2e +f \; = \; 2 \cdot 2 + 0 \; = \; 4. 
\end{eqnarray*}
\item If we have one $ (1,1,1)$-question, 
as in case~\ref{casc}\ref{caseAii}
\begin{eqnarray*}
2e +f \; \le \; 2 \cdot 1 + 3 \; = \; 5. 
\end{eqnarray*}
\item If we have no $ (1,1,1)$-question, then 
by Lemma~\ref{lem5}~\ref{item5a}
\begin{eqnarray*}
2e +f \; \le \; 2 \cdot 0 + 6 \; = \; 6. 
\end{eqnarray*}
\end{enumerate}

So we have finished the optimality proof for case~\ref{casd} and thus for case~\ref{heven}.
\end{enumerate}

\item $c$ odd.
\label{hodd}

We have
\begin{eqnarray*}
k & = & \frac{3c}{2} -\frac{3}{2}.
\end{eqnarray*}
We proceed similarly to case~\ref{casc}, starting by proving two claims.

\begin{claim}\label{claim missing color2}
If $ l_i \le \frac{c-1}{2} $
for some peg $i \in \{1,2,3\} $ on which
not all colors occur, then the strategy contains
at least $k$ questions.
\end{claim}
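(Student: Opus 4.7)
My plan is to mirror the argument of Claim~\ref{claim missing color} from the even case almost verbatim, adjusting only the arithmetic for the odd parity of $c$. The key observation is that the total number of questions of the strategy equals the total number of color occurrences summed over peg~$i$, so I only need a lower bound on that sum.

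First, since not all colors occur on peg~$i$, Lemma~\ref{lem2}\ref{item2a} implies that \emph{exactly} $c-1$ distinct colors appear on that peg. Of these, $l_i$ appear exactly once (by definition of $l_i$), and the remaining $(c-1)-l_i$ appear at least twice. This yields the lower bound
\begin{eqnarray*}
l_i + 2 \cdot \left( (c-1) - l_i \right) \;=\; 2(c-1) - l_i
\end{eqnarray*}
on the number of questions of the strategy.

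Second, I plug in the hypothesis $l_i \le (c-1)/2$ to obtain
\begin{eqnarray*}
2(c-1) - l_i \;\ge\; 2(c-1) - \frac{c-1}{2} \;=\; \frac{3(c-1)}{2} \;=\; \frac{3c-3}{2} \;=\; k,
\end{eqnarray*}
which is exactly the bound required by the claim.

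The main obstacle here is essentially nonexistent: since $c$ is odd, $(c-1)/2$ is an integer, so the arithmetic balances cleanly and the argument introduces no ideas beyond those already used to establish Claim~\ref{claim missing color}. I expect the genuinely substantial work in case~\ref{hodd} to come later, in the analogue of Claim~\ref{claim no missing color} and the subsequent combinatorial case analysis on $e$ and $f$, where the odd parity will force slightly different slack in the bounds $l_1 + l_2 + l_3 - 2e - f \ge k$.
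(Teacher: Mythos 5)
Your proof is correct and follows exactly the paper's argument: Lemma~\ref{lem2}\ref{item2a} gives $c-1$ colors on peg~$i$, of which $l_i$ occur once and the rest at least twice, so the question count is at least $2(c-1)-l_i\ge \frac{3(c-1)}{2}=k$. This is the same one-line computation the paper gives, just written out slightly more explicitly.
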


Clearly, if $ l_i \le \frac{c-1}{2} $,
then we have at least 
\begin{eqnarray*}
\frac{c-1}{2} + 2 \cdot 
\left( \frac{c-1}{2} \right) \; = \; 
\frac{3c}{2}-\frac{3}{2} \; = \; k
\end{eqnarray*}
questions, which verifies the claim.

\begin{claim}\label{claim no missing color2}
If $ l_i \le \frac{c+1}{2}+1 $
for some peg $i \in \{1,2,3\} $ on which
all colors occur, then the strategy contains
at least $k$ questions.
\end{claim}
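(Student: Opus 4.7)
The plan is to follow exactly the arithmetic template used for Claim~\ref{claim no missing color} in the even case, adjusting only the constants to fit odd $c$. The starting observation is that the total number of occurrences of colors on any fixed peg equals the total number of questions of the strategy, so it suffices to obtain a lower bound on the number of occurrences on peg~$i$.

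First I would note that, since all $c$ colors occur on peg~$i$ by assumption, and exactly $l_i$ of them occur there only once, the remaining $c - l_i$ colors each contribute at least two occurrences to peg~$i$. This yields a lower bound of
\begin{equation*}
l_i + 2(c - l_i) \;=\; 2c - l_i
\end{equation*}
on the total number of questions. Next I would substitute the hypothesis $l_i \le \frac{c+1}{2}+1 = \frac{c+3}{2}$ and verify the arithmetic
\begin{equation*}
2c - \frac{c+3}{2} \;=\; \frac{4c - c - 3}{2} \;=\; \frac{3c-3}{2} \;=\; k,
\end{equation*}
which immediately gives the desired conclusion.

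There is essentially no obstacle here: the argument is a single counting step, and the proof of the claim is just the odd-$c$ analogue of Claim~\ref{claim no missing color}. The only thing to double-check is the bookkeeping that the hypothesis bound $\frac{c+1}{2}+1$ translates precisely to the threshold $\frac{c+3}{2}$ above which $2c - l_i$ would fall below~$k$. Once the claim is established, it plugs directly into the ongoing case analysis for odd~$c$, exactly mirroring how Claim~\ref{claim no missing color} was used in case~\ref{heven}.
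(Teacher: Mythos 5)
Your proof is correct and coincides with the paper's own argument: both count occurrences on peg~$i$ as $l_i + 2(c-l_i) = 2c - l_i$ using the hypothesis that all $c$ colors occur there, and both verify that the bound $l_i \le \frac{c+3}{2}$ yields at least $\frac{3c-3}{2} = k$ questions. The paper merely writes the same computation with the extremal value of $l_i$ substituted directly.
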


This is clear as well, because the assumption $ l_i \le \frac{c+1}{2}+1 $,
together with the fact that all colors occur on peg~$i$, implies that
we have at least 
\begin{eqnarray*}
\frac{c+1}{2} + 1 + 2 \cdot 
\left( \frac{c-1}{2}-1 \right) \; \ge \; 
\frac{3c}{2}-\frac{3}{2} \; = \; k
\end{eqnarray*}
questions.

Now we finish the part~\ref{hodd} of the proof by
considering three sub-cases. 

\begin{enumerate}[label=(\roman*)]
\item\label{case}
For each of the three pegs there exists 
one color which does not occur throughout the entire
strategy.

By Claim~\ref{claim missing color2}, we only need to consider the situation
where $l_i\ge\frac{c-1}{2}+1$ for every $i\in\{1,2,3\}$,
which means that
\begin{eqnarray*}
l_1+l_2+l_3\;\ge\;3 \cdot \left( \frac{c+1}{2} \right) 
\; = \; \frac{3c}{2} +\frac{3}{2} \; = \;
k + 3.
\end{eqnarray*}
By Observation~\ref{lrel}, we thus have to show that 
\begin{eqnarray*}
\label{rel3}
2e + f & \le & 3.
\end{eqnarray*}
By Lemma~\ref{lem3}~\ref{item3d}, at most 
one $ (1,1,1) $-questions exists, which yields two cases:

\begin{enumerate}[label=(\arabic*)]
\item If we have one $ (1,1,1)$-question, then 
by Lemma~\ref{lem3}~\ref{item3c}
\begin{eqnarray*}
2e +f \; = \; 2 \cdot 1 + 0 \; = \; 2. 
\end{eqnarray*}
\item If we have no $ (1,1,1)$-question, then 
by Lemma~\ref{lem4}~\ref{item4b}
\begin{eqnarray*}
2e +f \; \le \; 2 \cdot 0 + 3 \; = \; 3. 
\end{eqnarray*}
\end{enumerate}

So we have completed case~\ref{case}.

\item\label{casf} For exactly two of the three pegs there exists 
one color which does not occur 
throughout the entire strategy.

By Claims~\ref{claim missing color2} and~\ref{claim no missing color2},
we need to consider the case where $l_i\ge\frac{c-1}{2}+1$ for exactly 
two pegs $i\in\{1,2,3\}$ and $ l_i \ge \frac{c+1}{2}+2 $ for the remaining
peg, which yields
\begin{eqnarray*}
l_1+l_2+l_2&\ge&\left( \frac{c+1}{2} + 2 \right) 
+ 2 \cdot \left( \frac{c+1}{2} \right) \\
&=&\frac{3c}{2} +\frac{7}{2} \\
&=& k + 5. 
\end{eqnarray*}
Again using Observation~\ref{lrel}, this means that we have to show that
\begin{eqnarray*}
\label{rel5b}
2e + f & \le & 5.
\end{eqnarray*}
By Lemma~\ref{lem3}~\ref{item3b}, at most 
two $ (1,1,1) $-questions exist, and so we have the following cases:

\begin{enumerate}[label=(\arabic*)]
\item If there are two $ (1,1,1)$-questions, then 
by Lemma~\ref{lem3}~\ref{item3a}
\begin{eqnarray*}
2e +f \; = \; 2 \cdot 2 + 0 \; = \; 4. 
\end{eqnarray*}
\item If there is one $ (1,1,1)$-question, then 
by Lemma~\ref{lem4}~\ref{item4a}
\begin{eqnarray*}
2e +f \; \le \; 2 \cdot 1 + 3 \; = \; 5. 
\end{eqnarray*}
\item If there is no $ (1,1,1)$-question, then 
by Lemma~\ref{lem5}\ref{item5b}
\begin{eqnarray*}
2e +f \; \le \; 2 \cdot 0 + 5 \; = \; 5. 
\end{eqnarray*}
\end{enumerate}

So we have finished the proof for case~\ref{casf}.

\item\label{casg} For at most one of the three pegs there exists 
a color which does not occur throughout the entire
strategy.

Again using Claims~\ref{claim missing color2} and~\ref{claim no missing color2},
the case to be studied is the one where
\begin{eqnarray*}
l_1+l_2+l_3&\ge& 2 \cdot \left( \frac{c+1}{2} + 2 \right) 
+ \left( \frac{c+1}{2} \right) \\
&=&\frac{3c}{2} +\frac{11}{2}\\
&=&k + 7. 
\end{eqnarray*}
By Observation~\ref{lrel}
we thus need to show that
\begin{eqnarray*}
\label{rel7b}
2e + f & \le & 7.
\end{eqnarray*}
Again we have at most 
two $ (1,1,1) $-questions, and thus the following cases:

\begin{enumerate}[label=(\arabic*)]
\item If there are two $ (1,1,1)$-questions, then 
by Lemma~\ref{lem3}~\ref{item3a}
\begin{eqnarray*}
2e +f \; = \; 2 \cdot 2+0 \; = \; 4. 
\end{eqnarray*}
\item If there is one $ (1,1,1)$-question, then 
by Lemma~\ref{lem4}~\ref{item4a}
\begin{eqnarray*}
2e +f \; \le \; 2 \cdot 1 + 3 \; = \; 5. 
\end{eqnarray*}
\item If there is no $ (1,1,1)$-question, then 
by Lemma~\ref{lem5}~\ref{item5a}
\begin{eqnarray*}
2e +f \; \le \; 2 \cdot 0 + 6 \; = \; 6. 
\end{eqnarray*}
\end{enumerate}

So the proof for case~\ref{casg}
is complete, and so is the optimality proof in its entirety.
\hfill $ \square $
\end{enumerate}
\end{enumerate}

\section{Conclusions and Future Work}
\label{sec-futurework}

We have introduced a new game, called Static Black-Peg AB~Game,
a variant of the AB~Game for which, to the best of our knowledge,
only the case of equal number of pegs and colors (called
``Static Permutation Mastermind'') has been studied before~\cite{GJSS21,LMS22}. 
We have developed optimal strategies for this game
for two and three pegs, extending similar results for Static 
Black-Peg Mastermind with two and three pegs~\cite{Jag16,JD18,JD20}.

Some of the methods and strategies used in this work would also simplify the 
earlier strategies~\cite{Jag16,JD20} of the related Static Black-Peg Mastermind, which makes them easier to read and reason about.
In particular, the iterated questions of Table~\ref{tab234c} and \ref{tab36}
can be used in optimal strategies for Static Black-Peg Mastermind
with two and three pegs, respectively. 

It would be an interesting task
to extend the strategies to more than three pegs.
However, even though such strategies obviously exist, 
the optimality proofs will likely be quite challenging.
An attempt to do so should probably aim at developing a
schema for constructing optimal strategies for an arbitrary
numbers of pegs $p$. The fact that our strategies for $p=2$ and
$p=3$ pegs clearly share some structural characteristics may
provide clues. A definition of optimal strategies parameterized
with $p$ may also facilitate an optimality proof that works for
all of them.

Further, solving Static Black-Peg Mastermind is equivalent to 
the well-studied problem of determining 
the metric dimension of undirected graphs, in our case the graph $ \ZZ_c^p $.
Thus, the fact that the strategy construction principle of iterated blocks used in this paper applies to Static Black-Peg Mastermind as well yields simplified proofs of the metric dimension of $ \ZZ_c^2 $ and $ \ZZ_c^3 $.
We believe that our methods can furthermore be applied to Static Black-Peg 
Mastermind for constant $p>3$, and also to the situation where the pegs have
independent numbers of colors. 
This would lead to new discoveries regarding the metric dimension of $ 
\ZZ_c^p$ for $p>3$ and of $ \ZZ_{c_1} \times \ZZ_{c_2} \times \times \ZZ_{c_3} $, 
where $ c_1, c_2, c_3 $ may differ.


\bibliographystyle{plain}
\bibliography{static_abgame_23_20}

\end{document}